\newcommand\Z{{\mathbb Z}}
\newcommand\Q{{\mathbb Q}}
\newcommand\R{{\mathbb R}}
\renewcommand\P{{\mathbb P}}
\newcommand{\F}{{\mathbb F}}
\newcommand{\Fp}{{\mathbb F}_p}
\newcommand{\GL}{\operatorname{GL}}
\newcommand{\Fbar}{\overline{F}}
\newcommand{\fbar}{\overline{f}}
\newcommand{\fpbar}{\overline{f_1}}
\renewcommand{\hbar}{\overline{h}}
\newcommand{\Qp}{{\mathbb Q}_p}
\newcommand{\Zp}{{\mathbb Z}_p}
\newcommand{\vol}{\operatorname{vol}}
\newcommand{\oldalpha}{\beta}
\newcommand{\oldbeta}{\alpha}
\newcommand\mC{{C}} 
\newcommand\Cbar{\overline{C}}
\DeclareFontFamily{OT1}{rsfs}{}
\DeclareFontShape{OT1}{rsfs}{n}{it}{<-> rsfs10}{}
\DeclareMathAlphabet{\mathscr}{OT1}{rsfs}{n}{it}
\newtheorem{theorem}{Theorem}
\newtheorem{proposition}[theorem]{Proposition}
\newtheorem{corollary}[theorem]{Corollary}
\newtheorem{lemma}[theorem]{Lemma}
\title{The proportion of genus one curves over $\Q$ defined by
  a binary quartic that everywhere locally have a point}
\author{Manjul Bhargava, John Cremona, and Tom Fisher}
\date{27th July 2020}
\begin{document}

\maketitle

\begin{abstract}
  We consider the proportion of genus one curves over
  $\Q$ of the form $z^2=f(x,y)$ where $f(x,y)\in\Z[x,y]$ is a binary
  quartic form (or more generally of the form $z^2+h(x,y)z=f(x,y)$
  where also $h(x,y)\in\Z[x,y]$ is a binary quadratic form) that have
  points everywhere locally.
  We show that the proportion of these curves
  that are locally soluble, computed as a product of local
  densities, is approximately 75.96\%.  We prove that the local density
  at a prime $p$ is given by a fixed degree-$9$ rational function of
  $p$ for all odd $p$ (and for the generalized equation, the same
  rational function gives the local density at every prime).  An
  additional analysis is carried out to estimate rigorously the local
  density at the real place.
\end{abstract}

\section{Introduction}
\label{sec:Intro}

In this paper we show that most genus one curves over $\Q$ of the form
$z^2=f(x,y)$, where $f\in\Z[x,y]$ is a binary quartic form, have
a point everywhere locally.

Consider the family of equations
\begin{equation}\label{hypereq}
z^2 = f(x,y) = ax^4+bx^3y+cx^2y^2+dxy^3+ey^4,
\end{equation}
where $a,b,c,d,e\in\Z$.  Provided that $f$ is squarefree, such an
equation defines a genus one curve over~$\Q$.  We define the \emph{height}
of the equation~\eqref{hypereq} by
\begin{equation}\label{def:H}
H(f):=\max\{|a|,|b|,|c|,|d|,|e|\}.
\end{equation}
Let $\rho$ denote the density of equations (\ref{hypereq}), when
ordered by height, that are everywhere locally soluble;
that is,
\begin{equation}\label{def:rho}
\rho = \lim_{X\to\infty} \frac{\#\{f \mid H(f) \le
  X,\ \text{(\ref{hypereq}) everywhere locally soluble}\}}
                    {\#\{f \mid H(f) \le X\}}.
\end{equation}
Since 100\% of binary quartics over~$\Z$, when ordered by height, are
squarefree, $\rho$ also represents the density of genus one curves~$C$ of the
form $z^2=f(x,y)$ that have a point everywhere locally.
In this paper we describe how to compute $\rho$, and in particular
show that $\rho \approx 75.96\%$.

We define local densities for solubility as follows.  For $p$ a prime,
we identify the space of equations~\eqref{hypereq} having coefficients
in $\Zp$ with the space $\Zp^{5}$ equipped with its natural additive
Haar measure.  For measurable subsets~$S$ of~$\Zp^5$, or $\Zp^n$ for
any~$n\ge1$, we will also refer to the measure of~$S$ as its \emph{density},
or as the \emph{probability} that a random element of~$\Zp^n$ lies in~$S$.
We then write $\rho(p)$ for the density of equations~\eqref{hypereq}
over $\Zp$ that have a solution over~$\Qp$.
We also write $\rho(\infty)$ for the probability that an equation of
the form~\eqref{hypereq}, with real coefficients independently and
uniformly distributed in $[-1,1]$, has a real solution.

It is then a theorem of Poonen and Stoll~\cite{PS2} (which in turn
relies on the sieve of Ekedahl~\cite{Ek}) that the density~$\rho$
exists and is given by
\begin{equation}\label{prodformula}
\rho=\rho(\infty)\prod_{p \text{ prime }}\rho(p).
\end{equation}
We prove the following theorem.

\begin{theorem}\label{maingenusonelocal}
  Let $\rho(p)$ denote the density of binary quartic forms $f$ over
  $\Zp$ such that $z^2=f(x,y)$ is soluble over $\Qp$.  Then
  \[\rho(p) = \left\{\begin{array}{cl}
  \displaystyle\frac{23087}{24528}& \mbox{if $p=2;$}\\[.125in]
  R(p) & \mbox{if $p\ge3$,}\end{array}\right.\]
  where $R$ is the rational function
\begin{equation}
\label{R1-formula}
R(t)
 = 1 - \frac{4 t^{7} + 4 t^{6} + 2 t^{5} + t^{4} + 3 t^{3} + 2 t^{2} + 3 t + 3}{8(t + 1) (t^{2} + t + 1)  (t^{6} + t^{3} + 1)}.
\end{equation}
\end{theorem}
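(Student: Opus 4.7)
For odd $p$ the plan is to compute $\rho(p)$ by a $p$-adic integration that partitions the space $\Zp^5$ of coefficients of $f$ according to the reduction $\fbar \pmod p$ and recurses at the degenerate strata. The basic dichotomy is: if $\fbar \in \Fp[x,y]$ is separable, the projective curve $z^2 = \fbar(x,y)$ is smooth over $\Fp$ and Hensel's lemma reduces local solubility of $z^2 = f$ over $\Qp$ to the existence of a primitive $(x_0, y_0) \in \Fp^2$ with $\fbar(x_0, y_0) \in (\Fp^\times)^2$. If $\fbar$ has a multiple factor at a point $(x_0 : y_0) \in \P^1(\Fp)$, one zooms in: apply a $\GL_2(\Zp)$-change of variables to move the singularity to $(0:1)$, substitute $x \mapsto px$, rescale $z$ by the appropriate power of $p$, and analyze the resulting binary form on a smaller $p$-adic neighborhood.

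To carry this out, I would classify the $\GL_2(\Fp)$-orbits of $\fbar$ according to root multiplicity pattern --- $(1,1,1,1)$, $(2,1,1)$, $(2,2)$, $(3,1)$, $(4)$, and the zero form --- together with Galois-conjugate subtypes (e.g.\ an irreducible quadratic factor or an irreducible quartic). For each separable orbit the conditional solubility probability is computed by counting the primitive $(x_0, y_0) \in \Fp^2$ with $\fbar(x_0, y_0)$ a nonzero square, using $\Fp$-point counts on the associated smooth genus-one curve (Weil-type bounds suffice for large $p$, and small $p$ are handled case-by-case). For each degenerate orbit, the zoom-in produces an auxiliary solubility problem that either (i) reduces, after suitable normalization, to the original $\rho(p)$-question --- contributing a linear term in $\rho(p)$ with a $p^{-k}$ density weight --- or (ii) is a simpler subproblem (such as $z^2 = c\cdot g(x,y)$ with $g$ of smaller degree and $v_p(c)$ prescribed) that can be evaluated in closed form. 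Summing the contributions gives a finite linear system whose solution, simplified over $\Q(p)$, yields the rational function $R(p)$ of~\eqref{R1-formula}; the denominator $8(p+1)(p^2+p+1)(p^6+p^3+1)$, a product of cyclotomic factors, records the sizes of the orbits that appear in the recursion tree.

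For $p=2$ there is no clean square/nonsquare dichotomy in the residue field, so I would replace the above recursion by a finite computation. Using the explicit form of Hensel's lemma for $z^2=f(x,y)$ over $\Q_2$, identify an integer $N$ such that solubility is determined by $f\bmod 2^N$ (up to a small list of residual singular strata, resolved by a short separate recursion); enumerate the $2^{5N}$ residue classes of $(a,b,c,d,e)\bmod 2^N$ and for each class decide solubility of any lift of the associated equation. The resulting normalized count equals $23087/24528$.

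The principal obstacle is closing the recursion at odd $p$: each zoom-in step can create new sub-strata (including iterated $\fbar\equiv 0\pmod p$ cases, each suppressed by a factor of $p^{-5}$, and finer degenerations inside them), and one must argue that these substrata --- modulo the $\GL_2(\Zp)$-symmetry --- form only finitely many states whose associated generating function is rational in $p$. Managing the deep iterates at the quadruple-root and identically-zero strata, and verifying that the combinatorial bookkeeping produces precisely the numerator $4t^7+4t^6+2t^5+t^4+3t^3+2t^2+3t+3$, is the crux of the proof.
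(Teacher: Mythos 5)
Your outline follows the same broad strategy as the paper (stratify by the reduction mod~$p$, lift smooth points by Hensel, blow up/rescale at strata where only singular points exist, and close a finite linear system in $p$), so the approach is not wrong in spirit; but as written it has a concrete error and, more importantly, omits exactly the content that constitutes the proof. The error is in the stratification: root-multiplicity pattern plus the Galois type of the roots does not determine the analysis. The decisive dichotomy is whether $z^2-\fbar$ is absolutely irreducible, splits over $\Fp$, or splits only over $\F_{p^2}$. If $\fbar$ is not a perfect square up to a constant (patterns $(1,1,1,1)$, $(2,1,1)$, $(3,1)$), the reduced curve is geometrically irreducible of arithmetic genus one and \emph{always} has a smooth $\Fp$-point (Proposition~\ref{prop:gbq_absirred}); your recipe ``if $\fbar$ has a multiple factor, zoom in at the singularity'' would miss these smooth points. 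For patterns $(2,2)$ and $(4)$ one has $\fbar=cs^2$, and everything depends on whether $c$ is a square in $\Fp^\times$ -- if it is, the curve splits into two conics and is always soluble; if not, every $\Fp$-point is singular and one must recurse. This quadratic-residue distinction is invisible in your list of orbit invariants. A further unaddressed point: when two singular points must be lifted (two roots of $s$ in the non-square case, or two double roots of $\frac1p f$ in the $\fbar\equiv 0$ stratum), computing the probability that at least one lifts as $1-(1-\alpha)^2$ requires proving the two lifting events are independent; the paper does this by an explicit measure-preserving substitution (Lemma~\ref{lem1}), and without such an argument the final rational function comes out wrong.

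The larger problem is that you explicitly defer ``closing the recursion'' and ``verifying the bookkeeping,'' but that \emph{is} the proof. The paper's argument consists precisely of exhibiting a finite list of states ($\rho$, the conditional probability $\rho^*$ under condition $(*)$, $\sigma_4$, $\sigma_4'$, $\lambda$, $\tau_4$, and the auxiliary quantities of Lemmas~\ref{lem:two-lemmas-1} and~\ref{two-lemmas-2}) together with explicit seven-line chains of valuation-increasing reductions (Lemmas~\ref{lemB} and Proposition~\ref{prop:tau4}) showing that each deep stratum returns, after a bounded number of steps with total density weight $p^{-9}$, to a previously defined state; this is where the factor $p^9-1=(p-1)(p^2+p+1)(p^6+p^3+1)$ in the denominator actually comes from. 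Asserting that ``one must argue these substrata form finitely many states'' without producing the states and the transition probabilities leaves the theorem unproved. Finally, your $p=2$ plan does not work as described: there is no $N$ for which solubility is determined by $f\bmod 2^N$, and the residual stratum (where $b$ and $d$ are both even, so that $z^2-\fbar$ is a square in characteristic $2$) has density $1/4$, not ``small,'' and requires the full recursion. The paper instead completes the square into the generalized form $z^2+hz=f$, for which the whole analysis is uniform in $p$ (Theorem~\ref{thm:genbinquartic}), and reads off $\rho(2)=\frac34+\frac14\sigma_4$ from the already-computed $\sigma_4$ (see \S\ref{subsec:p=2}); you would need either to adopt that reduction or to redo the entire recursion $2$-adically.
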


We also carry out a rigorous numerical integration to prove the
following.
\begin{proposition}  \label{prop:R-density-bounds}
Let $\rho(\infty)$ denote the probability that an
equation~$z^2=f(x,y)$, where $f$ is a binary quartic form with real
coefficients independently and uniformly distributed in $[-1,1]$, has
a real solution. Then
\[
  0.873954 \le \rho(\infty) \le 0.874124.
\]
\end{proposition}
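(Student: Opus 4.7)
The plan is to reformulate the proposition as a five-dimensional volume computation and evaluate it by rigorous adaptive cubature. First, the equation $z^2 = f(x,y)$ has a real point with $(x,y)\ne(0,0)$ if and only if $f(x,y)\ge 0$ for some $(x,y) \in \R^2 \setminus \{0\}$, equivalently, if and only if $f$ is not negative definite. The involution $f \mapsto -f$ preserves the uniform distribution on $[-1,1]^5$ and exchanges positive- with negative-definite forms, so if $P_{\mathrm{pd}}$ denotes the probability that a uniform random $f$ is positive definite, then $\rho(\infty) = 1 - P_{\mathrm{pd}}$. Hence the proposition is equivalent to proving $0.125876 \le P_{\mathrm{pd}} \le 0.126046$, a spread of $1.7 \times 10^{-4}$.

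Next I would set up an effective certificate for positive definiteness. A binary quartic with leading coefficient $a>0$ is positive definite if and only if its dehomogenization $g(t) = f(t,1) \in \R[t]$ has no real roots; this is a decidable polynomial condition on $(a,b,c,d,e)$, testable by sign conditions on the discriminant $\Delta(g)$ together with auxiliary classical invariants that separate the ``four complex roots'' case from the ``four real roots'' case, or equivalently by a Sturm sequence. Crucially, all of these tests are polynomial in the coefficients, so interval arithmetic allows one to certify, on any small coordinate box $B \subset [-1,1]^5$, that every quartic in $B$ is positive definite, or that none is, whenever the decisive polynomials have constant sign on $B$. I would then run a branch-and-bound adaptive subdivision of $[-1,1]^5$: bisect any box on which the interval-valued certificate is inconclusive; accumulate a lower bound (from boxes certified entirely positive definite) and an upper bound (those plus still-ambiguous boxes); and continue refining until the gap drops below $1.7 \times 10^{-4}$.

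The main obstacle is that the boundary of the positive-definite region inside $[-1,1]^5$ lies on the four-dimensional discriminant hypersurface of binary quartics, whose geometric extent is large enough that a uniform subdivision fine enough to resolve the volume to the target precision would require infeasibly many cells. The bulk of the work is therefore in handling boundary boxes efficiently: using sharp interval arithmetic, choosing an economical certificate (for example, attempting to write $f$ as a product of two real positive-definite binary quadratic forms, which is a purely algebraic feasibility check), and focusing refinement only where the sign of the discriminant (or the factorizability) is undetermined. Once the adaptive cubature terminates at the required precision, the sum of certified interior volumes and of interior-plus-ambiguous volumes yields the claimed bounds on $\rho(\infty)$.
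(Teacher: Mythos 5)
Your proposal matches the paper's method: both reduce $\rho(\infty)$ to the volume of the (non-)definite locus in $[-1,1]^5$ and bound it by a rigorous branch-and-bound subdivision with certified box tests based on the classical no-real-roots criterion ($\Delta>0$, and $H>0$ or $Q<0$). The only differences are implementation details: instead of interval arithmetic on the invariants, the paper certifies each box exactly by testing four corner quartics (using monotonicity of $f(x)$ in the coefficients separately for $x\ge 0$ and $x<0$), and it addresses the feasibility obstacle you identify by exploiting homogeneity to scale the largest coefficient to $\pm 1$, reducing to a $4$-dimensional subdivision run to recursion depth $46$.
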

\noindent
A Monte Carlo simulation (see \S\ref{sec:R_density}) suggests that
$\rho(\infty)$ is equal to $0.87411$ to five decimal places.

Theorem~\ref{maingenusonelocal} and Proposition~\ref{prop:R-density-bounds},
together with equation \eqref{prodformula}, imply the following.

\begin{theorem}\label{main}
When equations of the form~$(\ref{hypereq})$ with coefficients in~$\Z$
are ordered by height, a proportion of
\[ \rho = \rho(\infty)\frac{23087}{24528}\prod_{p>2} \left(
1 - \frac{4 p^{7} + 4 p^{6} + 2 p^{5} + p^{4} + 3 p^{3} + 2 p^{2} + 3 p + 3}{8(p + 1) (p^{2} + p + 1)  (p^{6} + p^{3} + 1)}
\right) \]
have points everywhere locally.
We have
\[
              0.759515 \le \rho \le  0.759663.
  \]
\end{theorem}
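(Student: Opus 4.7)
The plan is to combine the explicit formula of Theorem~\ref{maingenusonelocal} with the interval bounds of Proposition~\ref{prop:R-density-bounds}, substitute both into the Poonen--Stoll product formula~\eqref{prodformula}, and then carry out a rigorous numerical estimate of the resulting infinite product over odd primes.

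First, I would substitute $\rho(2) = 23087/24528$ and $\rho(p) = R(p)$ for $p \ge 3$ directly into~\eqref{prodformula}. Since that formula guarantees $\rho = \rho(\infty) \prod_p \rho(p)$ with an absolutely convergent product, this at once yields the displayed closed-form expression for $\rho$. The quantitative two-sided bound then reduces to estimating $\Pi := \prod_{p > 2} R(p)$ to sufficient precision. Expanding the rational function in~\eqref{R1-formula} one sees that $1 - R(t) = \tfrac{1}{2} t^{-3} + O(t^{-4})$ as $t \to \infty$, so the product converges rapidly.

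The natural approach is to split
\[ \Pi = \prod_{3 \le p \le P} R(p) \cdot \prod_{p > P} R(p) \]
for some explicit cutoff $P$, evaluate the finite factor in high-precision interval (or exact rational) arithmetic, and bound the tail above and below by combining an effective inequality of the form $1 - R(p) \le c\, p^{-3}$ (valid for all $p \ge 3$, with $c$ close to $1/2$) with a crude estimate for $\sum_{p > P} p^{-3}$, e.g.\ by comparison with $\int_P^\infty x^{-3}\, dx = \tfrac{1}{2} P^{-2}$. Combined with the elementary inequalities $1-x \le e^{-x} \le 1-x+\tfrac{1}{2}x^2$ for $x \in [0,1]$, this converts the tail sum into explicit two-sided bounds on the tail product.

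The main obstacle is numerical rather than conceptual: $P$ must be chosen large enough that the resulting uncertainty in $\Pi$ does not degrade the final bound below the width of the interval $[0.873954, 0.874124]$ coming from Proposition~\ref{prop:R-density-bounds}, which is already of order $1.7 \times 10^{-4}$. Given the $p^{-3}$ decay, a modest cutoff (e.g.\ $P$ in the hundreds) will suffice. Multiplying the resulting interval for $\Pi$ by the exact factor $23087/24528$ and by the interval for $\rho(\infty)$, using interval arithmetic, produces the claimed bounds $0.759515 \le \rho \le 0.759663$.
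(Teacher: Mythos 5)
Your proposal follows essentially the same route as the paper: Theorem~\ref{maingenusonelocal} and Proposition~\ref{prop:R-density-bounds} are substituted into the Poonen--Stoll formula~\eqref{prodformula}, and the remaining work is a routine rigorous numerical evaluation of the product over odd primes. One small slip: since the numerator in~\eqref{R1-formula} has degree $7$ and the denominator degree $9$, one has $1-R(t)=\tfrac{1}{2}t^{-2}+O(t^{-3})$, not $\tfrac{1}{2}t^{-3}+O(t^{-4})$; the tail is therefore controlled by $\sum_{p>P}p^{-2}\le P^{-1}$, and because almost all of the final slack of about $1.5\times10^{-4}$ is already consumed by the interval for $\rho(\infty)$, the cutoff $P$ must be taken considerably larger than ``in the hundreds'' (or a sharper tail bound used) --- still entirely routine, so the argument stands.
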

\noindent
The Monte Carlo simulation described above suggests that the value of $\rho$ is equal to  
$0.75965$ to five decimal places. 

One striking feature of Theorem~\ref{maingenusonelocal} is that, for
$p>2$, the quantities $\rho(p)$ are given by a fixed rational
function $R$ evaluated at $p$.
In a sequel to this
article we will show that this phenomenon continues for higher genus
hyperelliptic curves, provided that $p$ is sufficiently large compared
to the genus.

Our strategy for proving Theorem~\ref{maingenusonelocal} 
is a refinement of
that for testing solubility of a binary quartic form over $\Zp$
(equivalently, $\Qp$) as described, for example, in the work of Birch
and Swinnerton-Dyer~\cite{BSD}; the arguments 
are also related to
those in our earlier work on determining the density of locally
soluble quadratic forms (with Keating and Jones) in~\cite{BCFJK} and
ternary cubic forms in \cite{BCF-cubic}.  We consider the reductions
modulo~$p$; equations (\ref{hypereq}) whose reductions have smooth
$\Fp$-points are soluble by Hensel's lemma, while those that have no
$\Fp$-points are insoluble.  Finally, to determine the probabilities of
solubility in the much more difficult remaining cases, we develop certain
recursive formulae, involving these and other suitable related
probabilities, that allow us to solve for and obtain exact algebraic
expressions for the desired probabilities.

\bigskip

We may instead consider equations of the form
\begin{equation}
    z^2 + h(x,y)z = f(x,y), \label{eq:gen_bq}
\end{equation}
with $f(x,y)$ as in~\eqref{hypereq} and~$h(x,y)=lx^2+mxy+ny^2$.  For
these equations, as in \cite{CFS}, we get a more uniform result for
all primes~$p$, including $p=2$.  For each~$p$, we identify the space
of generalized binary quartics~\eqref{eq:gen_bq} over~$\Zp$ with the
space $\Zp^8$ equipped with its natural additive Haar measure.  Then
we prove the following theorem.

\begin{theorem}\label{thm:genbinquartic}
  For every prime~$p$, the density~$\rho'(p)$ of generalized binary
  quartics \eqref{eq:gen_bq} over $\Zp$ that are soluble over $\Qp$
  is $R(p)$, where $R$ is the rational function~\eqref{R1-formula}.
\end{theorem}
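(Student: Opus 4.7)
The plan is to split into cases according to whether $p$ is odd or $p = 2$, reducing the odd case directly to Theorem~\ref{maingenusonelocal} by completing the square, and treating $p = 2$ by redoing the recursive analysis in the generalized setting.

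For odd $p$, I would identify $\Z_p^8$ with pairs $(f, h)$ of binary forms of degrees $4$ and $2$, and introduce the map $\Phi : (f, h) \mapsto (f + \tfrac14 h^2,\, h)$. Because $4$ is a unit in $\Z_p$, $\Phi$ is a bijection of $\Z_p^8$; for each fixed $h$ it is a translation in the $f$-coordinate, so it preserves the additive Haar measure. Over $\Q_p$ the substitution $w = z + \tfrac12 h(x,y)$ gives $z^2 + hz = f \iff w^2 = f + \tfrac14 h^2$, so solubility over $\Q_p$ is $\Phi$-invariant. Pushing the uniform measure forward, $\rho'(p)$ equals the probability that $w^2 = g(x,y)$ is soluble for $g$ uniform on $\Z_p^5$, which is $\rho(p) = R(p)$ by Theorem~\ref{maingenusonelocal}.

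For $p = 2$ this change of variables fails because $4 \notin \Z_2^\times$, and I would instead redo the Hensel/blow-up case analysis underlying Theorem~\ref{maingenusonelocal} directly with the extra parameter $h$. The structural point that should make the analysis go through is that for $F(x, y, z) = z^2 + h(x, y) z - f(x, y)$ one has $\partial F/\partial z \equiv h(x, y) \pmod 2$, so a mod-$2$ point $(x_0 : y_0 : z_0)$ on the reduction is smooth (and Hensel applies) as soon as $h(x_0, y_0) \not\equiv 0 \pmod 2$, irrespective of the behaviour of $f$ there. The degenerate cases (where both $h$ and the gradient of $f$ vanish modulo~$2$ along the vanishing locus) should then succumb to the same blow-up / substitution step used at odd primes, producing a recursion for $\rho'(2)$ that carries the additional Haar mass coming from the coefficients of $h$.

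The main obstacle is to verify that at $p = 2$ this recursion collapses to exactly $R(2)$ rather than to some other rational number such as the $23087/24528$ appearing in Theorem~\ref{maingenusonelocal} for the non-generalized equation. Concretely, I would enumerate the singular mod-$2$ reductions of $z^2 + hz = f$ case by case, compute the conditional Hensel and blow-up probabilities while tracking the extra freedom contributed by $h$, and check that the resulting recursive formulae are identical to the ones that produce $R(p)$ for odd $p$, now evaluated at $p = 2$. One expects that, case by case, the $h$-coefficients supply precisely the Haar mass needed to cancel the $p = 2$ irregularities in the non-generalized analysis, so that the final sum assembles into $R(2)$. Combined with the odd $p$ case, this would give $\rho'(p) = R(p)$ for every prime $p$.
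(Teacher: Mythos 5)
Your proposal inverts the logical structure of the paper and, more importantly, omits the entire quantitative core of the argument. In the paper, Theorem~\ref{thm:genbinquartic} is proved \emph{first}, by a single analysis valid uniformly for all primes $p$ (including $p=2$), and Theorem~\ref{maingenusonelocal} is then \emph{deduced} from it: for odd $p$ by exactly the completing-the-square bijection you describe, and for $p=2$ by the short separate argument of \S\ref{subsec:p=2}. Your odd-$p$ step therefore cites as known the very statement that the paper derives from the theorem you are trying to prove; within the paper's logic this is circular, and as a standalone argument it presupposes an independent proof of $\rho(p)=R(p)$ for odd $p$ that you do not supply. The measure-preservation and solubility-invariance of $(f,h)\mapsto(f+\tfrac14h^2,h)$ are fine, but they only transfer the problem; they do not compute $R(p)$.

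The genuine gap is at the computational heart of the theorem. The value $R(p)$ comes out of a delicate recursive analysis: the factorization-type counts of $\overline{F}=z^2+\overline{h}z-\overline{f}$ over $\F_p$ (Lemma~\ref{lem:gbq_counts}), the conic lemmas (Lemmas~\ref{lem:two-lemmas-1} and~\ref{two-lemmas-2}), the independence arguments for lifting the two singular points (Lemma~\ref{lem1}), the seven-line blow-up recursions for $\lambda$ and $\tau_4$, and finally the solution of the coupled linear system in $\rho$, $\rho^*$, $\sigma_3$, $\sigma_3^*$, $\sigma_4$, $\sigma_4'$. None of this appears in your proposal; for $p=2$ you reduce the theorem to the assertion that ``one expects'' the recursion to collapse to $R(2)$, which is precisely the statement to be proved. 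Your observation that $\partial F/\partial z\equiv h\pmod 2$ is a reasonable starting point, but the paper's uniformity at $p=2$ is not something verified case by case after the fact: the whole analysis is organized around the factorization of $\overline{F}$ as a quadratic in $z$ (rather than around $\overline{f}$ and its discriminant), so that no step ever divides by $2$, and the same rational function emerges for every $p$ by construction. To make your plan into a proof you would need to carry out that uniform recursion in full --- at which point the odd/even case split, and the appeal to Theorem~\ref{maingenusonelocal}, become unnecessary.
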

\noindent
Note that $\rho(2)=23087/24528\approx0.94125$ while
$\rho'(2)=1625/1752\approx0.92751$.

We define the \emph{height}~$H(h,f)$ of a generalized binary 
quartic~\eqref{eq:gen_bq} given by a pair of binary forms~$(h,f)$ by
\begin{equation}\label{def:Hgen}
H(h,f) :=\max\{|l|^2,|m|^2,|n|^2,|a|,|b|,|c|,|d|,|e|\},
\end{equation}
which generalizes the height~$H(f)$ defined for binary quartics.  As
before, we define the probability $\rho'(\infty)$ that an equation of the
form~\eqref{eq:gen_bq}, with real coefficients independently and
uniformly distributed in $[-1,1]$, has a real solution.
The
value of~$\rho'(\infty)$ is different from $\rho(\infty)$, and is
approximately~$0.873743$: see Section~\ref{sec:R_density}.

Finally, we let $\rho'$ denote the density of equations
(\ref{eq:gen_bq}) with integer coefficients, when ordered by their
height~$H(h,f)$, that are everywhere locally soluble; that is,
\begin{equation}\label{def:rho'}
\rho' = \lim_{X\to\infty} \frac{\#\{(h,f) \mid H(h,f) \le
  X,\ \text{(\ref{eq:gen_bq}) everywhere locally soluble}\}}
                    {\#\{(h,f) \mid H(h,f) \le X\}}.
\end{equation}
We now obtain a version of Theorem~\ref{main} for generalized binary
quartics.  To give the analogue of~\eqref{prodformula} also for
$\rho'$, we use a weighted version of~\cite[Lemma 1]{PS2}, proved by
the same methods in~\cite[Prop.~3.4]{CremonaSadek}, that takes into
account the weights in the definition of~$H(h,f)$.  We thus obtain the
following theorem.

\begin{theorem}\label{main-genbq}
When equations of the form~$(\ref{eq:gen_bq})$ with coefficients
in~$\Z$ are ordered by height, a proportion of
\[ \rho' = \rho'(\infty)\prod_{p} \left(
1 - \frac{4 p^{7} + 4 p^{6} + 2 p^{5} + p^{4} + 3 p^{3} + 2 p^{2} + 3
  p + 3}{8(p + 1) (p^{2} + p + 1) (p^{6} + p^{3} + 1)} \right) \] have
points everywhere locally.  We have
\[
    \rho' \approx 0.748248.  
\]
\end{theorem}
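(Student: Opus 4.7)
The plan is to derive Theorem~\ref{main-genbq} by combining the explicit local density computation of Theorem~\ref{thm:genbinquartic} with an Ekedahl-style product formula, and then to evaluate the resulting expression. The first and main step is to show that
\[ \rho' = \rho'(\infty) \prod_{p\text{ prime}} \rho'(p). \]
Unlike the situation of Theorem~\ref{main}, the height $H(h,f)$ defined in~\eqref{def:Hgen} is weighted: the three coefficients $l,m,n$ of $h$ appear squared, so that for a given height bound~$X$ they range over an interval of length $O(X^{1/2})$, while the five coefficients of~$f$ range over an interval of length~$O(X)$. Thus the unweighted Poonen--Stoll result~\cite[Lemma~1]{PS2} used for Theorem~\ref{main} does not apply directly. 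Instead I would invoke the weighted variant of that lemma proved by the same sieve technique in~\cite[Prop.~3.4]{CremonaSadek}, whose hypotheses accommodate precisely the mixed weights $(1,1,1,1,1,\tfrac12,\tfrac12,\tfrac12)$ that $H(h,f)$ places on $(a,b,c,d,e,l,m,n)$. Verifying the hypotheses of that proposition amounts to establishing a uniform bound of the form $1-\rho'(p) = O(1/p^2)$ together with an analogous uniform bound on the densities of elements of $\Zp^8$ whose insolubility is detected only after reduction modulo a large power of~$p$; both follow by essentially the same analysis used in the binary quartic case behind Theorem~\ref{main}.

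Once the product formula is in place, the remaining steps are routine. Theorem~\ref{thm:genbinquartic} asserts that $\rho'(p)=R(p)$ at \emph{every} prime~$p$, with no exceptional behavior at~$p=2$ as there was in Theorem~\ref{maingenusonelocal}. Substituting yields
\[ \rho' = \rho'(\infty) \prod_{p} R(p), \]
which, on expanding~\eqref{R1-formula}, gives the displayed formula in the statement of the theorem. For the numerical value, one uses the estimate $\rho'(\infty)\approx 0.873743$ established by the analysis in Section~\ref{sec:R_density}. Since~\eqref{R1-formula} gives $1-R(p) = 1/(2p^2) + O(1/p^3)$, the Euler product $\prod_p R(p)$ converges absolutely; truncating at a sufficiently large prime~$P$ and bounding the tail $\sum_{p>P}\log(1/R(p))$ by a convergent Dirichlet sum yields the claimed approximation $\rho' \approx 0.748248$.

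The main obstacle is really just the first step: confirming that the Ekedahl sieve in the weighted form of~\cite{CremonaSadek} applies in our setting and delivers an honest Euler product for~$\rho'$. Everything downstream --- substituting the local densities from Theorem~\ref{thm:genbinquartic}, and multiplying the resulting infinite product by~$\rho'(\infty)$ to read off the numerical value --- is mechanical and parallels the derivation of Theorem~\ref{main} from Theorem~\ref{maingenusonelocal} and Proposition~\ref{prop:R-density-bounds}.
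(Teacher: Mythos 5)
Your proposal matches the paper's own derivation: the paper likewise obtains the Euler product for $\rho'$ from the weighted Ekedahl-sieve result of~\cite[Prop.~3.4]{CremonaSadek} (accounting for the squared weights on $l,m,n$ in~\eqref{def:Hgen}), substitutes $\rho'(p)=R(p)$ from Theorem~\ref{thm:genbinquartic} at every prime including $p=2$, and multiplies by the Monte Carlo estimate $\rho'(\infty)\approx 0.873743$ to get the stated approximate value. The only caveat, which does not affect correctness of the theorem as stated, is that $\rho'(\infty)$ is only estimated (not rigorously bounded) in Section~\ref{sec:R_density}, so the final numerical value is an approximation rather than a proved interval, exactly as in the paper.
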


The results of this paper are used by the first author in
\cite{most-hyper} to prove that a positive proportion of equations of
the form~\eqref{hypereq} fail the Hasse Principle.  They are also used
by the third author, Ho, and Park in~\cite{FisherHoPark} to determine
the density of bidegree $(2,2)$-forms over~$\Z$ (which correspond to
genus one curves over~$\Q$ embedded in $\P^1\times\P^1$) that have
points everywhere locally.

The proof of Theorem~\ref{thm:genbinquartic} is given in
Section~\ref{sec:bq_density}.  After some preliminaries
in~\S\ref{subsec:notprelim} and~\S\ref{subsec:counts}, the main proof
is presented in~\S\ref{subsec:proofThm4}.  For odd primes~$p$,
Theorem~\ref{maingenusonelocal} then follows immediately, since by
completing the square we have $\rho(p) = \rho'(p)$. The modifications
required when $p = 2$ are described in \S\ref{subsec:p=2}. Finally, in
Section~\ref{sec:R_density}, we describe the methods we used to
estimate the probability $\rho(\infty)$ of solubility over the
reals, establishing Proposition~\ref{prop:R-density-bounds} and hence
Theorem~\ref{main}.

\section{The density of soluble generalized binary quartics over $\Zp$}
\label{sec:bq_density}
In this section, we determine the probability that a genus one curve
over $\Qp$, given by an equation in the general form
\eqref{eq:gen_bq} with coefficients in~$\Zp$, has a
$\Qp$-rational point.

A similar, slightly simpler, argument can be applied to the equation
\eqref{hypereq} over~$\Zp$ for odd primes~$p$, and yields exactly the same
probability; it is easy to see that this must be the case, using a straightforward
argument based on completing the square when $2$ is a unit.

\subsection{Notation and preliminaries}
\label{subsec:notprelim}
Let $h(x,y)=lx^2+mxy+ny^2$ and $f(x,y)= ax^4+bx^3y+cx^2y^2+dxy^3+ey^4$
be binary forms over~$\Zp$, and let
\[
F(x,y,z)=z^2+h(x,y)z-f(x,y).
\]
We refer to the pair $(h,f)$ or $F$ itself as a ``generalized binary
quartic''.  The polynomial $F$ is weighted homogeneous, where $x,y,z$
have weights $1,1,2$ respectively, and defines a curve~$\mC$ in
weighted projective space $\P(1,1,2)$ over~$\Qp$; this has genus one
provided that it is smooth.  We denote reduction modulo~$p$ by a bar,
so that $\hbar, \fbar \in \Fp[x,y]$ are (possibly zero) binary forms
over~$\Fp$.

For every $\Qp$-point~$(x:y:z)$, we may choose homogeneous
coordinates~$x,y,z\in\Zp$, not all in~$p\Zp$; then at least one of
$x,y$ is a unit.  In what follows, we will always choose such
primitive integral coordinates.

Our overall strategy is based on the observation that $\Qp$-rational
points reduce modulo~$p$ to $\Fp$-points on the reduced curve~$\Cbar$
and that smooth $\Fp$-points lift to $\Qp$-points by Hensel's lemma.
Thus if $\Cbar$ has smooth $\Fp$-points, then
$\mC(\Qp)\not=\emptyset$, while if $\Cbar(\Fp)=\emptyset$, then
$\mC(\Qp)=\emptyset$.  If all the $\Fp$-points are
singular, then we have to work harder: geometrically, we then blow up
the singular points; in our exposition, we will explicitly make
variable substitutions.  This will lead to a recursion, from which we
will then be able to solve for the various densities or probabilities of
solubility in different special configurations.

\subsection{Counts of generalized binary quartics over $\Fp$}
\label{subsec:counts}
We divide into cases according to the factorization of $\Fbar$ over
the algebraic closure~$\overline{\F}_p$ of~$\Fp$; clearly, this
occurs either over~$\Fp$ itself or over~$\F_{p^2}$, giving the following
four \emph{factorization types}:
\begin{enumerate}
\item $\Fbar$ absolutely irreducible;
\item $\Fbar$ has distinct factors over~$\Fp$, i.e.,
  $\Fbar=(z-s_1(x,y))(z-s_2(x,y))$ with $s_1,s_2\in\Fp[x,y]$
  distinct;
\item $\Fbar$ has conjugate factors over~$\F_{p^2}$, i.e.,
  $\Fbar=(z-s_1(x,y))(z-s_2(x,y))$ with
  $s_1,s_2\in\F_{p^2}[x,y]$ conjugate over $\Fp$;
\item $\Fbar$ has a repeated  factor over~$\Fp$, i.e.,
  $\Fbar=(z-s(x,y))^2$ with $s\in\Fp[x,y]$.
\end{enumerate}
In the following lemma we give the counts of how many of the $p^8$
pairs $(\hbar,\fbar)$ fall into each of these cases, and how many
there are that also satisfy the side condition
\[
    z^2 + lz - a \qquad\text{is irreducible over $\Fp$}  \tag{$*$}
\]
which will occur later, and will be referred to as ``condition $(*)$''.

\begin{lemma}
\label{lem:gbq_counts}
The numbers of generalized binary quartics over~$\Fp$ with each
factorization type are given in the following table, as well as the
same counts for those satisfying condition $(*)$: \em
\[ \begin{array}{|l|c|c|}
\hline
\text{Factorization type} & \text{All} & \text{Satisfying $(*)$}\\ \hline\hline
\text{1. Absolutely irreducible}  &  p^6(p^2-1) & \frac{1}{2}p^5(p^2-1)(p-1)\\
\text{2. Distinct factors over $\Fp$}  &  \frac{1}{2}p^3(p^3-1) & 0 \\
\text{3. Conjugate factors over $\F_{p^2}$}  & \frac{1}{2}p^3(p^3-1) & \frac{1}{2}p^5(p-1)\\
\text{4. Repeated factor over $\Fp$}  &  p^3 & 0\\ \hline
\text{Total} & p^8 & \frac{1}{2}p^7(p-1)\\ \hline
\end{array} \]
\end{lemma}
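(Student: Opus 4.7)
The plan is to prove each entry of the table by a direct bijective count driven by the factorization of $\Fbar$. The total $p^8$ is immediate since $\hbar$ has $3$ and $\fbar$ has $5$ independent coefficients. For types 2, 3, and 4, a factorization $\Fbar=(z-s_1)(z-s_2)$ (respectively $\Fbar=(z-s)^2$) corresponds to the pair $(\hbar,\fbar)$ via the relations $\hbar=-(s_1+s_2)$ and $-\fbar=s_1s_2$, so the task reduces to counting the $s_i$'s modulo the obvious symmetry.

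For type 2, unordered pairs of distinct binary quadratics in $\Fp[x,y]$ number $\binom{p^3}{2}=\tfrac12 p^3(p^3-1)$. For type 3, Galois-conjugate pairs $\{s_1,\overline{s_1}\}$ with $s_1\in\F_{p^2}[x,y]\setminus\Fp[x,y]$ number $\tfrac12(p^6-p^3)$. For type 4 the parameter is a single $s\in\Fp[x,y]$, contributing $p^3$; the map $s\mapsto(\hbar,\fbar)$ is injective because in odd characteristic $\hbar=-2s$ recovers $s$, while in characteristic $2$ we have $\hbar=0$ and $\fbar=s^2$, and Frobenius on $\Fp[x,y]$ is injective. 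The type 1 count is then $p^8$ minus the sum of the other three, giving $p^6(p^2-1)$.

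For the column labeled $(*)$, the key observation is that $z^2+lz-a=F(1,0,z)$, where $l$ is the $x^2$-coefficient of $h$ and $a$ the $x^4$-coefficient of $f$. In types 2 and 4 this specialization factors as a product of $\Fp$-linear factors (or a square), hence is reducible, forcing the count to be zero. In type 3 it equals $(z-s_1(1,0))(z-\overline{s_1}(1,0))$, which is irreducible over $\Fp$ exactly when the leading coefficient $L$ of $s_1=Lx^2+Mxy+Ny^2$ lies in $\F_{p^2}\setminus\Fp$; this condition automatically guarantees $s_1\ne\overline{s_1}$, so the count of unordered pairs is $\tfrac12(p^2-p)\cdot p^2\cdot p^2=\tfrac12 p^5(p-1)$.

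To finish, I would compute the total count with $(*)$ and subtract. Since $(*)$ depends only on $(l,a)$, and the number of irreducible monic quadratics over $\Fp$ equals $\tfrac12 p(p-1)$ (obtained as $p^2$ minus the $\binom{p+1}{2}$ reducibles), the total is $\tfrac12 p(p-1)\cdot p^2\cdot p^4=\tfrac12 p^7(p-1)$; subtracting the type 3 contribution yields $\tfrac12 p^5(p-1)(p^2-1)$ for type 1. The main subtlety I anticipate is ensuring uniformity in $p$, most notably at $p=2$: this is handled by the Frobenius argument in type 4 and by the observation that the formula $\tfrac12 p(p-1)$ for irreducible monic quadratics over $\Fp$ is valid for every prime, so the same table works unconditionally.
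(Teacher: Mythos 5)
Your proposal is correct, and it is exactly the elementary computation the paper has in mind (the paper states this lemma without proof, treating it as a routine count). All entries check out, including the characteristic-$2$ injectivity of $s\mapsto s^2$ in type 4 and the observation that condition $(*)$ in type 3 is equivalent to the leading coefficient of $s_1$ lying in $\F_{p^2}\setminus\Fp$.
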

Let $\xi_i$ (respectively, $\xi_i^*$) denote the probability that a
generalized binary quartic~$F$ over $\Fp$ (respectively, a
generalized binary quartic~$F$ satisfying $(*)$) has factorization
type~$i$ for $i=1,2,3,4$.  Then Lemma~\ref{lem:gbq_counts} implies the
following.
\begin{corollary}
\label{cor:gbq_probs}
The probabilities $\xi_i$, $\xi_i^*$ are given as follows:
\[
\begin{array}{|c|c|c|}\hline
  i & \xi_i  & \xi_i^* \\
  \hline\hline
  1 & (p^2-1)/p^2 & (p^2-1)/p^2 \\
  2 & \frac{1}{2}(p^3-1)/p^5 &  0 \\
  3 & \frac{1}{2}(p^3-1)/p^5 & 1/p^2 \\
  4 & 1/p^5 & 0 \\ \hline
\end{array}
\]
\end{corollary}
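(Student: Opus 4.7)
The corollary is a direct formal consequence of Lemma \ref{lem:gbq_counts}, so the plan is simply to compute each probability as the ratio of an appropriate count. The sample space for $\xi_i$ is the set of generalized binary quartics over $\Fp$, identified with $\Fp^8$ via the eight coefficients $l,m,n,a,b,c,d,e$ and carrying the uniform distribution; thus I would obtain $\xi_i$ by dividing the entry in the ``All'' column of the lemma by $p^8$. The quantity $\xi_i^*$ is a conditional probability: the relevant sample space is the subset of $\Fp^8$ satisfying condition $(*)$, which by the bottom row of the lemma has cardinality $\tfrac{1}{2}p^7(p-1)$, so I would obtain $\xi_i^*$ by dividing the entry in the ``Satisfying $(*)$'' column by this total.

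The verification of each of the eight table entries is then a single algebraic simplification. For example, $\xi_1 = p^6(p^2-1)/p^8 = (p^2-1)/p^2$, $\xi_2 = \tfrac{1}{2}p^3(p^3-1)/p^8 = \tfrac{1}{2}(p^3-1)/p^5$, and $\xi_3^* = \tfrac{1}{2}p^5(p-1)\big/\bigl(\tfrac{1}{2}p^7(p-1)\bigr) = 1/p^2$; the remaining cases are analogous, and the two zero entries for $\xi_2^*$ and $\xi_4^*$ are immediate from the corresponding zero counts in the lemma. There is no genuine obstacle: the content of this subsection lies in the enumeration carried out in Lemma \ref{lem:gbq_counts}, while the corollary is merely a restatement of those counts in probabilistic language.
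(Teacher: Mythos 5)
Your proposal is correct and matches the paper's (implicit) argument exactly: the corollary follows from Lemma~\ref{lem:gbq_counts} by dividing the ``All'' counts by $p^8$ and the ``Satisfying $(*)$'' counts by $\tfrac{1}{2}p^7(p-1)$, and all eight resulting simplifications check out. No further comment is needed.
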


Define a binary form~$f(x,y)$ to be \emph{monic} if $f(1,0)=1$.  We
will need the counts of binary quartics (up to scaling), and the
number of monic binary quartics, over~$\Fp$ with certain
factorization patterns over~$\Fp$, distinguished by the number of
roots in $\P^1(\Fp)$ with various multiplicities: (0) none, (1) at
least one simple root, (2) a double and no simple roots, (3) two
double roots, or (4) a quadruple root.  An elementary computation
yields the following.
\begin{lemma}
\label{lem:q_counts}
The numbers of nonzero binary quartics over~$\Fp$ (up to scaling by $\Fp^\times$)
with each factorization type are given in the following table, as well
as the same for monic quartics: \em
\[ \begin{array}{|l|c|c|c|}\hline
\text{Factorization type} & \text{Binary quartics mod } \Fp^\times & \text{Monic quartics} \\ \hline \hline
\text{0. No roots} & \frac{1}{8}p(p-1)(3p^2+p+2) & \frac{1}{8}p(p-1)(3p^2+p+2) \\
\text{1. Simple root}  & \frac{1}{8}p(p+1)(5p^2+p+2) & \frac{1}{8}p(p-1)(5p^2+3p+2)  \\
\text{2. One double and no simple root} & \frac{1}{2}p(p^2-1) & \frac{1}{2}p^2(p-1) \\
\text{3. Two double roots} & \frac{1}{2}p(p+1) & \frac{1}{2}p(p-1) \\
\text{4. Quadruple root} & p+1 & p \\ \hline
\text{Total} & p^4 + p^3 + p^2 + p + 1 & p^4\\ \hline
\end{array} \]
\end{lemma}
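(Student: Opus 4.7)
The plan is to prove Lemma~\ref{lem:q_counts} by a case analysis on the factorization of a binary quartic into $\Fp$-irreducible factors, refined among linear factors by their multiplicities. Writing each factorization pattern as a multiset of factor degrees (with exponents denoting repetitions among equal factors), the eleven possibilities are $(4)$, $(3,1)$, $(2,2)$, $(2^2)$, $(2,1,1)$, $(2,1^2)$, $(1,1,1,1)$, $(1^2,1,1)$, $(1^2,1^2)$, $(1^3,1)$, $(1^4)$. Sorting by the multiplicities of the $\Fp$-rational roots, Type~0 of the lemma corresponds to $\{(4),(2,2),(2^2)\}$; Type~1 to $\{(3,1),(2,1,1),(1,1,1,1),(1^2,1,1),(1^3,1)\}$; and Types~2, 3, 4 to the singletons $\{(2,1^2)\}$, $\{(1^2,1^2)\}$, $\{(1^4)\}$ respectively.

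The main building block is the count of irreducible binary forms of each degree~$n\le 4$ over~$\Fp$. Up to $\Fp^\times$-scaling, irreducible degree-$n$ binary forms biject with size-$n$ Galois orbits on~$\P^1(\overline{\F}_p)$; combined with $|\P^1(\F_{p^n})|=p^n+1$ and inclusion-exclusion over divisors of~$n$, this yields $p+1$, $p(p-1)/2$, $p(p^2-1)/3$, and $p^2(p^2-1)/4$ irreducible forms of degrees $1,2,3,4$ respectively. Any irreducible form of degree $\ge2$ has nonzero leading coefficient in~$x$ (else $[1:0]$ would be a root), so each scaling class of such a form contains a unique monic representative, yielding the same counts for monic irreducibles; for linear forms, exactly $p$ of the $p+1$ scaling classes are monic, namely $\{x-\alpha y : \alpha\in\Fp\}$.

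For each of the eleven patterns I would multiply the counts of its irreducible constituents, inserting a binomial coefficient whenever two or more constituents of the same degree must be distinct. The up-to-scaling column is then immediate for the short types: Type~4 $=p+1$, Type~3 $=\binom{p+1}{2}=p(p+1)/2$, Type~2 $=(p+1)\cdot p(p-1)/2$; for Types~0 and~1 one sums three and five such products respectively. The monic column is entirely analogous, but each $\Fp$-rational root contributes a factor of $p$ rather than $p+1$, reflecting that $[1:0]$ cannot be a root of a monic quartic. A routine polynomial simplification then matches each sum with the claimed closed form; e.g., the monic Type~1 terms $\binom{p}{4}+p\binom{p-1}{2}+p(p-1)+\tfrac14 p^2(p-1)^2+\tfrac13 p^2(p^2-1)$ collapse, after clearing denominators and factoring out $p(p-1)$, to $\tfrac18 p(p-1)(5p^2+3p+2)$.

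The main obstacle is simply the bookkeeping for Type~1, with its five subpatterns and most elaborate polynomial identity; in particular, one must not overlook the subcase $(1^3,1)$ with a triple linear factor nor confuse the ordered and unordered counts for $(1,1,1,1)$ versus $(1^2,1,1)$. As a sanity check I would verify that the two column totals equal $p^4+p^3+p^2+p+1=|\P^4(\Fp)|$ and $p^4$; these identities, already displayed in the final row of the lemma's table, ensure that every factorization pattern has been counted exactly once.
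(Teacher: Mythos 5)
Your proposal is correct, and since the paper gives no proof of this lemma beyond the phrase ``an elementary computation,'' your argument simply supplies the intended computation in full: the decomposition into the eleven factorization patterns over $\Fp$, the counts $p+1$, $\tfrac12 p(p-1)$, $\tfrac13 p(p^2-1)$, $\tfrac14 p^2(p^2-1)$ of irreducible binary forms of degrees $1$ through $4$ up to scaling (with $p$ rather than $p+1$ monic linear forms), and the resulting sums all check out and simplify to the tabulated closed forms, with the column totals $p^4+p^3+p^2+p+1$ and $p^4$ confirming that no pattern is missed or double-counted.
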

Let $\eta_i$ (respectively, $\eta_i'$) denote the probability that a
nonzero binary quartic form~$f$ over $\Fp$ (respectively, a monic binary
quartic form~$f$) has factorization type~$i$ for $i=0,1,2,3,4$.  Then
Lemma~\ref{lem:q_counts} implies the following.
\begin{corollary}\label{cor:q_probs}
The probabilities $\eta_i$, $\eta_i'$ are given as follows:
\[
\begin{array}{|c|c|c|}\hline
  i &\eta_i  & \eta_i' \\
 \hline\hline
  0 & \frac{1}{8}{p(p-1)^2(3p^2+p+2)}/{(p^5-1)} & \frac{1}{8}{(p-1)(3p^2+p+2)}/{p^3}\\
  1 & \frac{1}{8}{p(p^2-1)(5p^2+p+2)}/{(p^5-1)} & \frac{1}{8}{(p-1)(5p^2+3p+2)}/{p^3}\\
  2 & \frac{1}{2}{p(p-1)(p^2-1)}/{(p^5-1)} & \frac{1}{2}{(p-1)}/{p^2}\\
  3 & \frac{1}{2}{p(p^2-1)}/{(p^5-1)} & \frac{1}{2}{(p-1)}/{p^3}\\
  4 & {(p^2-1)}/{(p^5-1)} & {1}/{p^3}\\ \hline
\end{array}
\]
\end{corollary}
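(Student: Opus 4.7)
The plan is to derive Corollary \ref{cor:q_probs} as an immediate algebraic consequence of Lemma \ref{lem:q_counts}; no further structural argument is needed, and the task reduces to dividing each count by the appropriate total.

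For $\eta_i$, since factorization type is scaling-invariant, it is immaterial whether we sample uniformly from the $p^5-1$ nonzero quartics or from the $(p^5-1)/(p-1)$ $\Fp^\times$-equivalence classes of such quartics; using the latter, $\eta_i$ equals the first-column count of Lemma \ref{lem:q_counts} multiplied by $(p-1)/(p^5-1)$. For instance,
\[
\eta_0 = \tfrac{1}{8}p(p-1)(3p^2+p+2)\cdot\frac{p-1}{p^5-1} = \frac{\tfrac{1}{8}p(p-1)^2(3p^2+p+2)}{p^5-1},
\]
matching the stated formula; the remaining four $\eta_i$ arise from the same substitution, and in each case the factor of $p-1$ contributed by scaling combines with the $p-1$ from the lemma count to produce exactly the numerators listed.

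For the monic probabilities $\eta_i'$, the sample space of monic binary quartics over $\Fp$ has size $p^4$ (free choice of $b,c,d,e \in \Fp$ with $a=1$ fixed), so $\eta_i'$ is the second-column count of Lemma \ref{lem:q_counts} divided by $p^4$; each case is a one-line simplification, e.g.\ $\eta_4' = p/p^4 = 1/p^3$ and $\eta_2' = \tfrac{1}{2}p^2(p-1)/p^4 = (p-1)/(2p^2)$. There is no substantive obstacle: all of the combinatorial content sits inside Lemma \ref{lem:q_counts}, which may be assumed, and the only remaining task is the routine verification that the reduced rational expressions recorded in the corollary agree with these quotients. As a sanity check, one should confirm $\sum_i \eta_i = 1$ and $\sum_i \eta_i' = 1$ using the totals in Lemma \ref{lem:q_counts}, which both follow directly from the final row of that lemma.
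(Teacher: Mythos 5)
Your proposal is correct and matches the paper's (implicit) argument: the corollary is stated as an immediate consequence of Lemma~\ref{lem:q_counts}, obtained exactly as you describe by multiplying the first-column counts by $(p-1)/(p^5-1)$ (each $\Fp^\times$-orbit of a nonzero quartic having exactly $p-1$ elements) and dividing the second-column counts by $p^4$. The sanity check $\sum_i\eta_i=\sum_i\eta_i'=1$ via the total rows is a sensible addition.
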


\subsection{Proof of Theorem \ref{thm:genbinquartic}}
\label{subsec:proofThm4}
Fix a prime $p$, and let $\rho=\rho'(p)$ be the probability that the
equation~(\ref{eq:gen_bq}) with coefficients in~$\Zp$ is
$\Qp$-soluble.  Let $\sigma_i$ denote the probability of solubility
of a generalized binary quartic that has factorization type~$i$.  Then
\begin{equation}\label{eqn:rho-from-rho4}
   \rho = \sum_{i=1}^{4}\xi_i\sigma_i,
\end{equation}
where the $\xi_i$ are as in Corollary~\ref{cor:gbq_probs}.

Similarly, let $\rho^*$ be the probability that a generalized
binary quartic~(\ref{eq:gen_bq}) that satisfies condition~$(*)$ is
$\Qp$-soluble.  For $i=1$ and $i=3$, let $\sigma_i^*$ denote the
probability of solubility of a generalized binary quartic that has
factorization type~$i$ and satisfies~$(*)$. (We do not define
$\sigma_2^*$ or $\sigma_4^*$.)  Then
\[
   \rho^* = \xi_1^*\sigma_1^* +  \xi_3^*\sigma_3^*.
\]

To compute~$\rho$, we evaluate each~$\sigma_i$; in so doing, we will
also need the values of~$\sigma_1^*$, $\sigma_3^*$ and~$\rho^*$. We
evaluate $\sigma_1$, $\sigma_1^*$, and $\sigma_2$ in
\S\ref{subsubsec:sigma12}, then~$\sigma_3$ and $\sigma_3^*$ in
\S\ref{subsubsec:sigma3}, and~$\sigma_4$ in \S\ref{subsubsec:sigma4}.
The latter is expressed in terms of $\rho$ and additional probabilities~$\tau_i$
for $0\le i\le 4$, defined and evaluated in~\S\ref{subsubsec:sigma4}
and~\S\ref{subsubsec:tau4}.  We then solve the resulting recursion for
$\rho$ in~\S\ref{subsubsec:conclusion}.

\subsubsection{Evaluation of $\sigma_1$, $\sigma_1^*$, and $\sigma_2$}
\label{subsubsec:sigma12}

The first two cases, where the reduction~$\Fbar$ is either absolutely
irreducible, or has distinct factors over~$\Fp$, are straightforward.
First, both $\sigma_1$ and~$\sigma_1^*$ are probabilities of $\Q_p$-solubility of a
generalized binary quartic~$F$ over~$\Zp$ whose reduction modulo~$p$
is absolutely irreducible. Such curves
always have smooth $\Fp$-points; this would not necessarily be the
case for hyperelliptic curves of genus~$g\ge2$, which will be treated
in a sequel to this paper.

\begin{proposition}
\label{prop:gbq_absirred}
Every generalized binary quartic over~$\Zp$ whose reduction
modulo~$p$ is absolutely irreducible has a $\Qp$-rational point; that
is, $\sigma_1=\sigma_1^*=1$.
\end{proposition}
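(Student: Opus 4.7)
The plan is to establish the proposition via Hensel's lemma: I will show that under the hypothesis of absolute irreducibility of $\Fbar$, the reduced curve $\Cbar \subset \P(1,1,2)_{\Fp}$ always possesses a smooth $\Fp$-rational point, which then lifts to a $\Qp$-point on $\mC$. This proves $\sigma_1 = 1$; since condition $(*)$ imposes only further restrictions on $F$ and plays no role in this point-counting argument, the same reasoning gives $\sigma_1^* = 1$.

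To produce a smooth $\Fp$-point, I use that absolute irreducibility makes $\Cbar$ a geometrically integral projective curve, whose arithmetic genus is $p_a = 1$ by adjunction in $\P(1,1,2)$ (the curve avoids the unique singular point $(0:0:1)$ of the ambient weighted projective space since $\Fbar(0,0,z) = z^2 + \hbar(0,0)z - \fbar(0,0)$ is not identically zero). Let $\widetilde C \to \Cbar$ be the normalization, of geometric genus $\tilde g$, so that $\sum_P \delta_P = p_a - \tilde g \in \{0, 1\}$. When $\tilde g = 1$ the curve $\Cbar$ is smooth, and the Hasse--Weil bound yields $|\Cbar(\Fp)| \ge p + 1 - 2\sqrt{p} \ge 1$ for every prime $p$, giving a smooth $\Fp$-point at once. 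When $\tilde g = 0$ there is exactly one geometric singular point with $\delta = 1$, necessarily $\Fp$-rational (since it is a single geometric point, hence Galois-stable) and of node or cusp type. The normalization $\widetilde C$ is then a smooth conic over $\Fp$, which always carries an $\Fp$-point and is therefore isomorphic to $\P^1_{\Fp}$, so $|\widetilde C(\Fp)| = p + 1$. At most two of these points can lie above the singular point, so at least $p - 1 \ge 1$ of them map to smooth $\Fp$-points of $\Cbar$.

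I expect the only subtleties to be in justifying the arithmetic genus computation in the weighted projective space and in the singularity classification ($\delta = 1$ forces a node or cusp, so the preimage in $\widetilde C$ has at most two points, split or not). Beyond those checks, the argument reduces to the Weil bound for the normalization plus bookkeeping of the preimage of a single singular point, and it proceeds uniformly in $p$ with no separate small-prime analysis required.
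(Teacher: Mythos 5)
Your proposal is correct and follows essentially the same route as the paper: arithmetic genus $1$, Hasse bound in the smooth case, and in the singular case a single singularity of multiplicity $2$ whose genus-zero normalization has $p+1$ points with at most two above the singular point, leaving at least $p-1\ge 1$ smooth $\Fp$-points to lift by Hensel. Your extra remarks (the curve avoiding the singular point of $\P(1,1,2)$, the Galois-stability of the unique singular point) are correct refinements of details the paper leaves implicit.
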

\begin{proof} The curve $\Cbar$ over~$\Fp$ defined by a generalized binary quartic
over~$\Zp$ whose reduction modulo~$p$ is absolutely irreducible has
arithmetic genus~$1$.  If it is smooth, then it has genus~$1$ and thus at
least one~$\Fp$-point by the Hasse bounds.  Otherwise, since it is
geometrically irreducible, its normalization is a smooth curve of
genus zero.  Since the genus drops by at least~$1$ for each singular
point, there must be exactly one singularity, and its multiplicity
must be~$2$.  Now the normalization has $p+1$ points, of which at most
two lie over the singular point of~$\Cbar$, so $\Cbar$ has at least
$p-1$ smooth points.  Thus, in all cases, $\Cbar$ has at least one
smooth point over~$\Fp$, which lifts to a $\Qp$-point.
\end{proof}

\begin{proposition}
\label{prop:gbq_split}
Every generalized binary quartic over~$\Zp$ whose reduction
modulo~$p$ splits into two distinct factors has a $\Qp$-rational
point, so $\sigma_2=1$.
\end{proposition}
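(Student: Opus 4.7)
The plan is to follow the same Hensel-lifting strategy as in the proof of Proposition~\ref{prop:gbq_absirred}: locate a smooth $\Fp$-point on the reduced curve $\Cbar$ and then lift. By the hypothesis of factorization type~2, we can write
\[
\Fbar(x,y,z) = (z - s_1(x,y))(z - s_2(x,y))
\]
with $s_1, s_2 \in \Fp[x,y]$ distinct binary forms of degree at most~$2$. The difference $s_1 - s_2$ is therefore a nonzero binary form of degree at most~$2$, so it has at most two zeros in $\P^1(\Fp)$. Since $\#\P^1(\Fp) = p+1 \ge 3$, there is always at least one point $(x_0:y_0) \in \P^1(\Fp)$ with $s_1(x_0,y_0) \ne s_2(x_0,y_0)$.

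Next I would check that for any such $(x_0:y_0)$, both of the points $(x_0:y_0:s_i(x_0,y_0))$ for $i = 1,2$ are smooth $\Fp$-points of $\Cbar$. Computing $\partial \Fbar/\partial z = 2z - (s_1 + s_2)$ and evaluating at $z = s_i(x_0,y_0)$ gives $s_i(x_0,y_0) - s_{3-i}(x_0,y_0)$, which is nonzero in every characteristic (including $p=2$) precisely because we have chosen $(x_0:y_0)$ to avoid the zeros of $s_1 - s_2$. So the smoothness check reduces to exactly the same condition that guaranteed existence of a suitable $(x_0:y_0)$, and Hensel's lemma then lifts such a smooth point to a $\Qp$-rational point of $\mC$, proving $\sigma_2 = 1$.

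There is essentially no obstacle in this argument; the only minor point worth flagging is to confirm that the smoothness criterion survives characteristic~$2$ (the derivation above shows it does, since the factor of~$2$ cancels), and that we need not worry about weighted-projective points with $x = y = 0$ because $\Fbar(0,0,z) = z^2$ has no solution with $z \in \Fp^\times$. Everything else is a one-line count in $\P^1(\Fp)$.
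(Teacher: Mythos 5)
Your proof is correct and follows essentially the same route as the paper: both arguments locate a point of $\P^1(\Fp)$ where $s_1$ and $s_2$ disagree (possible since a nonzero form of degree $\le 2$ has at most two zeros while $\#\P^1(\Fp)=p+1\ge 3$), observe that the corresponding point of $\Cbar$ is smooth, and lift by Hensel. Your explicit check that $\partial\Fbar/\partial z$ evaluates to $s_i-s_{3-i}$, so that smoothness holds even in characteristic $2$, is a nice touch but matches the paper's reasoning in substance.
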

\begin{proof}
We have $\Fbar = (z-s_1(x,y))(z-s_2(x,y))$, where $s_1,s_2\in\Fp[x,y]$
are distinct binary quadratic forms.  Each of the curves $z=s_i(x,y)$
has $p+1$ points over~$\Fp$, and since $s_1\not=s_2$, these two
curves intersect in at most~$2$ points. Hence, for all~$p$, there are
smooth $\Fp$-points, which lift to $\Qp$-points.
\end{proof}

\subsubsection{Some lemmas}
\label{subsubsec:lemmas}

The following lemma expresses a general principle that allows us to
make a change of coordinates in the arguments that follow.

\begin{lemma} \label{newlemma}
Let $\Phi$ and~$\Phi'$ be subsets of~$\Z_p[x_1, \ldots, x_n]$, each
defined by letting $m$ of the coefficients range over an open subset
of $\Z_p^m$ and fixing the remaining coefficients.  Suppose that
$\Phi'$ is the image of $\Phi$ under an affine linear substitution
\begin{equation}
\label{affine-linear}
\begin{pmatrix} x_1 \\ \vdots \\ x_n \end{pmatrix} \mapsto A
\begin{pmatrix} x_1 \\ \vdots \\ x_n \end{pmatrix} + b,
\end{equation}
where $A \in \GL_n(\Z_p)$ and $b \in \Z_p^n$.
Then the probability that a random polynomial in $\Phi$ is soluble
over $\Z_p$ is the same as the probability that a random polynomial in
$\Phi'$ is soluble over $\Z_p$.
\end{lemma}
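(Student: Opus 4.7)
The plan is to build an explicit bijection $\Phi \to \Phi'$ that both preserves $\Z_p$-solubility and pulls back to a Haar-measure-preserving bijection between the parameter spaces. Let $S \colon \Z_p^n \to \Z_p^n$ denote the affine substitution $S(x) = Ax + b$; because $A \in \GL_n(\Z_p)$, $S$ is a bijection of $\Z_p^n$ with inverse $x \mapsto A^{-1}(x-b)$ also valued in $\Z_p^n$. Define the pullback $T(g) := g \circ S$ on $\Z_p[x_1, \ldots, x_n]$. Then $T$ is a $\Z_p$-linear automorphism of polynomials, with inverse $g \mapsto g \circ S^{-1}$, and since $S$ is a bijection of $\Z_p^n$, a polynomial $g$ has a zero in $\Z_p^n$ if and only if $T(g)$ does. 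Thus the hypothesis $T(\Phi) = \Phi'$ automatically makes $T|_\Phi$ a solubility-preserving bijection onto $\Phi'$.

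To pass this bijection down to the parameter spaces, identify $\Phi$ with $U \subset \Z_p^m$ via the coordinate projection onto the $m$ free coefficients (the remaining coefficients being fixed at some value $v_0$), and identify $\Phi'$ similarly with $U' \subset \Z_p^m$ (fixed part $v_0'$). Let $V$ be a finite-rank $\Z_p$-submodule of $\Z_p[x_1,\ldots,x_n]$ containing $\Phi \cup \Phi'$; then $T$ restricts to an element of $\GL(V)$. Inside $V$, the set $\Phi$ sits in the affine subspace $v_0 + L$, where $L \cong \Z_p^m$ is the coordinate submodule spanned by the basis vectors of the free coefficients, and similarly $\Phi' \subset v_0' + L'$ with $L' \cong \Z_p^m$ a coordinate submodule.

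The key step, and the only point that requires any care, is to show that $T(L) = L'$. This holds because for any $g_1, g_2 \in \Phi$ one has $g_1 - g_2 \in L$, while $T(g_1 - g_2) = T(g_1) - T(g_2)$ is a difference of two elements of $\Phi'$ and hence lies in $L'$; applying the same reasoning to $T^{-1}$ yields the reverse containment. Since $L$ and $L'$ are both coordinate-$\Z_p^m$ direct summands of $V$, the restriction $T|_L \colon L \to L'$ is represented in standard bases by a matrix in $\GL_m(\Z_p)$, and therefore preserves Haar measure on $\Z_p^m$. Consequently the induced bijection $U \to U'$ is the restriction of an affine map with linear part in $\GL_m(\Z_p)$; it identifies the soluble subsets of $U$ and $U'$ and takes $U$ to $U'$, all with equal Haar measure, proving the equality of probabilities. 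Everything beyond the tangent-space identification $T(L) = L'$ is routine bookkeeping.
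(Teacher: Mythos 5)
Your argument is correct and is essentially the paper's own proof written out in full: the paper simply observes that the induced map on the $m$ varying coefficients is affine linear and invertible mod $p$, hence measure-preserving, and that solubility over $\Z_p$ is clearly preserved. Your identification of the linear part as an element of $\GL_m(\Z_p)$ via $T(L)=L'$ is the detailed justification of that one-line observation (note only that the differences $U-U$ span just a finite-index submodule $p^kL$ of $L$, so passing from $T(p^kL)\subseteq L'$ to $T(L)\subseteq L'$ uses that $L'$ is a saturated direct summand of the space of polynomials of bounded degree).
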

\begin{proof}
The effect of the transformation~\eqref{affine-linear} on the $m$
coefficients that vary is both affine linear and invertible mod $p$.
It therefore preserves the measure. Clearly solubility over $\Z_p$
is also preserved.
\end{proof}

As an example of the use of this lemma, we consider the set of monic
quadratics $g(x)=x^2+bx+c$ with coefficients in~$\Zp$, which we
identify with~$\Zp^2$ in the usual way. The lemma shows that the
probability $g$ has a $\Zp$-root, given that $g$ has a double root mod
$p$ at $x \equiv r \pmod{p}$, is independent of $r$. Thus in working
out the probability that $g$ has a $\Z_p$-root, given that $g$ has a
double root mod $p$, we may assume without loss of generality
that the double root is at $x \equiv 0 \pmod{p}$.

\medskip

The following two lemmas will be used to help evaluate $\sigma_3$
and~$\sigma_3^*$ in the next section.
The coefficients of a generalized binary quartic
are labelled as described in Section~\ref{subsec:notprelim}.
\begin{lemma}
\label{lem:two-lemmas-1}
Let $l,a,b \in p\Zp$ and $m,c\in\Zp$ be fixed, subject to the
condition that $z^2+mz-c$ is irreducible over~$\Fp$.  Let $\oldbeta$ be
the probability of the existence of $x,z\in\Zp$ with $F(x,1,z)=0$
given that $n,d,e\in\Zp$ (that is, $\oldbeta$ is the density of
$(n,d,e)\in\Zp^3$ for which such a solution exists), and $\oldalpha$ the
probability of such a solution given that $n,d,e\in p\Zp$. Then
\[
    \oldbeta = \dfrac{p}{p+1}
    \qquad\text{and}\qquad 
    \oldalpha = \dfrac{1}{p+1}.
\]
\end{lemma}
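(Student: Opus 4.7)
The plan is to compute $\oldbeta$ and $\oldalpha$ via a pair of recursions obtained from the reduction of $F(x,1,z)$ modulo~$p$ and Hensel's lemma.  Since $\bar l = \bar a = \bar b = 0$, the reduction is the affine conic
\[
\bar F(x,1,z) = z^2 + (\bar m x + \bar n)z - (\bar c x^2 + \bar d x + \bar e)
\]
in the plane $(x,z)$.  Its projective closure has no $\Fp$-points on the line $y = 0$, since the restriction there is $Z^2 + \bar m XZ - \bar c X^2$ whose $\Fp$-zeros would correspond to roots of the irreducible polynomial $z^2 + \bar m z - \bar c$.  It follows that the conic is either smooth --- in which case it has the full complement of $p + 1$ smooth affine $\Fp$-points, each lifting to a $\Zp$-solution by Hensel --- or it degenerates as a pair of $\F_{p^2}$-conjugate lines whose unique $\Fp$-intersection is a singular affine $\Fp$-point.  (Split or double $\Fp$-line degenerations are excluded, since they would supply an $\Fp$-point at infinity on the conic.)

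For $\oldbeta$, I would parametrize the singular case by the location $(x_0, z_0) \in \Fp^2$ of the singular point: solving $\partial \bar F/\partial x = \partial \bar F/\partial z = \bar F = 0$ gives a bijection between $\Fp^2$ and the set of singular triples $(\bar n, \bar d, \bar e) \in \Fp^3$, so the singular case has probability $p^2/p^3 = 1/p$ and the smooth case probability $1 - 1/p$.  In the singular case, Lemma~\ref{newlemma} lets me translate $(x,z) \mapsto (x + x_0, z + z_0)$, which preserves $\bar m, \bar c$ and the vanishing $\bar l = \bar a = \bar b = 0$, while forcing $\bar n = \bar d = \bar e = 0$.  This puts us in the $\oldalpha$ setup (with new admissible fixed lifts of $(l,a,b,m,c)$), so that $\oldbeta = (1 - 1/p) + (1/p)\,\oldalpha$.

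For $\oldalpha$, the only $\Fp$-point of $\bar F(x,1,z) = 0$ is the singular origin, so every $\Zp$-solution has the form $x = pX$, $z = pZ$.  A short computation shows that after writing $l = pl'$, $a = pa'$, $b = pb'$, $n = pn'$, $d = pd'$, $e = pe'$, the substitution $F(pX,1,pZ)$ decomposes as $p^2\, G(X,Z) - p e'$ with $G \in \Zp[X,Z]$; hence solubility forces $e' \in p\Zp$, an event of probability $1/p$.  Conditioning on $e' = pe''$, the equation $G(X,Z) = e''$ is again of the form~\eqref{eq:gen_bq}, with coefficients $(p^2 l', m, n', p^3 a', p^2 b', c, d', e'')$: the fixed coefficients still lie in $p\Zp$ and $\Zp$ with the same residues $\bar m, \bar c$, while $(n', d', e'') \in \Zp^3$ is uniform.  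This is again the $\oldbeta$ setup, so $\oldalpha = (1/p)\,\oldbeta$, and the two recursions combine to give the linear system whose unique solution in $[0,1]^2$ is $\oldbeta = p/(p+1)$ and $\oldalpha = 1/(p+1)$.

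The main subtlety is that after each step the ``new $\oldbeta$'' refers to a different fixed lift of $(l,a,b,m,c)$, so the recursion really asserts a relation between probabilities for a family of admissible fixed data.  This is handled by iterating $\oldbeta = (1-1/p) + p^{-2}\,\oldbeta_{\mathrm{next}}$ through the chain of successor lifts and using $\oldbeta_{\mathrm{next}} \in [0,1]$ to control the telescoping tail, yielding the geometric sum $(1-1/p)/(1-p^{-2}) = p/(p+1)$ in the limit, uniformly in the starting data.  All residue-level computations are uniform in~$p$; in particular for $p = 2$ the only admissible residues are $(\bar m, \bar c) = (1, 1)$ (so that $z^2 + z + 1$ is irreducible over~$\F_2$), and the bijection with $(x_0, z_0) \in \F_2^2$ still gives exactly $4$ singular triples out of $8$, so the argument proceeds without modification.
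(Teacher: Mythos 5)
Your proof is correct and follows essentially the same route as the paper's: you derive the same two recursions $\oldbeta = (1-\tfrac1p)+\tfrac1p\,\oldalpha$ and $\oldalpha = \tfrac1p\,\oldbeta$ by lifting smooth $\Fp$-points of the reduced conic via Hensel and blowing up the unique singular $\Fp$-point, then solve the linear system. The only (harmless) differences are that you compute the measure of the singular locus by parametrizing it by the singular point $(x_0,z_0)\in\Fp^2$ rather than via the conic's discriminant, and that you justify the independence of $\oldbeta,\oldalpha$ from the fixed data $(l,a,b,m,c)$ by a telescoping tail estimate where the paper simply observes that each step of the argument depends only on the stated valuation and residue conditions.
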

\begin{proof}
We will show that $\oldbeta=(1-1/p)+(1/p)\oldalpha$ and $\oldalpha=(1/p)\oldbeta$,
from which their values follow.  Note that while, \textit{a priori},
$\oldbeta$ and~$\oldalpha$ might depend on~$(l,a,b)\in p\Zp^3$ and~$(m,c)\in
\Zp^2$, we will see in the proof that this is not the case.

First, assume only that $n,d,e\in\Zp$. We have~$F(x,1,z) \equiv
z^2+(mx+n)z-(cx^2+dx+e)$, which defines a conic over~$\Fp$. The side
condition that $z^2+mz-c$ is irreducible over~$\Fp$ implies that
there are no $\Fp$-rational points at infinity.  If this conic is
smooth then it has $\Fp$-points; the probability of this is $1-1/p$,
since the discriminant of the conic is $e(m^2+4c)-(d^2+dmn-cn^2)$ and
$m^2+4c\not=0$, so for each pair $n,d$ there is a unique $e\pmod{p}$
for which the discriminant vanishes.  If the conic is singular, then
the singular point is the only $\Fp$-rational point (by the side
condition), and without loss of generality (see Lemma~\ref{newlemma})
we may suppose the singular
point to be at $(0,0)\pmod{p}$, for which the probability of
solubility is~$\oldalpha$.  This establishes the first equation.

Next suppose that $n,d,e\in p\Zp$. Then we have $F(x,1,z) \equiv
z^2+mxz-cx^2$, whose only zero over $\Fp$ is (by the side condition)
$(x,z)\equiv(0,0)$. The equation $F(x,1,z)=0$ implies $p^2\mid e$, so
with probability $1-1/p$ we have no solutions; otherwise we may
replace the variables $x,z$ by $px,pz$ and divide through by $p^2$,
leading back to the first case.  This establishes the second equation.

Note that in this reduction step the values of $c$ and~$m$ are
unchanged.  The coefficients $l,a,b$ become more divisible by~$p$ but
our first argument is unchanged, being independent of these values,
provided only that they have positive valuation.
\end{proof}

\begin{lemma}
\label{two-lemmas-2}
Let $l,m,c \in p\Z_p$ and $a,b\in p^2\Z_p$ be fixed, with
$v(c)=1$. Let $\alpha'$ be the probability of the existence of
$x,z\in\Z_p$ with $F(x,1,z)=0$ given that $n,e\in\Z_p$ and~$d\in p\Z_p$
(that is, $\alpha'$ is the density of~$(n,d,e)\in\Zp\times
p\Zp\times\Zp$ for which such a solution exists), and $\beta'$ the
probability of such a solution given that $n,d,e\in p\Z_p$. Then
\[
    \alpha' = \beta' = \dfrac{1}{2}.
\]
\end{lemma}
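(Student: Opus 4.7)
The plan is to set up a mutual recursion between $\alpha'$ and $\beta'$ by reducing $F \bmod p$ in each setup and performing appropriate changes of variables.

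In the $\alpha'$ setup, the valuation hypotheses on $l,m,c,a,b,d$ force
\[
F(x,1,z) \equiv z^2 + \bar n z - \bar e \pmod{p},
\]
a quadratic in $z$ independent of $x$. I would split according to its factorization over $\F_p$: two distinct roots (probability $(p-1)/(2p)$), irreducible (probability $(p-1)/(2p)$), and a double root (probability $1/p$). In the first case, Hensel's lemma applied with $\partial F/\partial z$ lifts any mod-$p$ point to a $\Z_p$-solution; in the second there is no mod-$p$ solution; in the third I would apply the shift $z \mapsto z + z_0 y^2$ (valid by Lemma~\ref{newlemma}), with $z_0$ a lift of the double root, to replace the problem by one with $n,d,e \in p\Z_p$ varying, the fixed coefficients $l,m,a,b$ unchanged, and a new $c$ of the form $c - z_0 l \in p\Z_p$, generically of valuation one. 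This is precisely the $\beta'$ setup, yielding the recursion $\alpha' = (p-1)/(2p) + \beta'/p$.

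For $\beta'$, every coefficient except that of $z^2$ reduces to zero mod~$p$, so $F \equiv z^2$ and any $\Z_p$-solution has $z \in p\Z_p$. Substituting $z = pz_1$ and dividing by $p$ yields an equation $G(x,z_1) = 0$ whose reduction is $G \equiv -(c' x^2 + d' x + e') \pmod p$ with $c' = c/p \in \Z_p^\times$ and $d', e' \in \Z_p$ uniform. The same three-case analysis of this quadratic in $x$ applies: distinct roots yield a $\Z_p$-solution by Hensel (using $\partial G/\partial x$); no roots give no solution; and in the double-root case I would shift $x$ to move the root to zero, substitute $x = px_2$, and divide by $p$. A direct calculation shows the resulting equation in $(x_2,z_1)$ again has the shape of an $\alpha'$-setup equation (with new $v(c) = 1$, coming from the unit $c' = c/p$ rescaled by a further factor of $p$), giving the dual recursion $\beta' = (p-1)/(2p) + \alpha'/p$. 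Subtracting and adding these two recursions forces $\alpha' = \beta'$ and $\alpha' + \beta' = 1$, hence $\alpha' = \beta' = 1/2$.

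The main obstacles will be the standard technicalities: handling Hensel's lemma at $p = 2$, where the relevant partial derivatives vanish identically and a more delicate or higher-order Hensel argument is required; and addressing the exceptional sub-case of Case~C of the $\alpha'$ analysis in which $v(l) = 1$ and $\bar n$ takes the one specific value forcing the shifted $c$-coefficient into $p^2 \Z_p$, so the recursion does not land squarely in the $\beta'$ setup. This sub-case has overall probability $1/p^2$ and will need a separate parallel analysis (or a more refined application of Lemma~\ref{newlemma}) to confirm that its contribution is also $1/2$.
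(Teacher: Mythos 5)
Your overall route is the same as the paper's: the identical mutual recursion $\alpha'=\tfrac12(1-\tfrac1p)+\tfrac1p\beta'$ and $\beta'=\tfrac12(1-\tfrac1p)+\tfrac1p\alpha'$, obtained from the same two quadratics ($z^2+\bar nz-\bar e$ in the $\alpha'$ configuration, $\tfrac1p(cx^2+dx+e)$ in the $\beta'$ configuration) and the same shift-and-rescale steps. Your first ``obstacle'' is not one: every Hensel lift in the argument is performed at a \emph{simple} root of a quadratic over $\Fp$, and a simple root automatically has nonvanishing derivative for every $p$, including $p=2$ (for $z^2+\bar nz-\bar e$ the $z$-derivative at $p=2$ is the constant $\bar n$, and $\bar n=0$ is precisely the repeated-root branch, not the Hensel branch).

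Your second obstacle is real --- the paper's own proof passes over it in silence --- but your proposed resolution is wrong. In the exceptional sub-case ($v(l)=1$ and $z_0\equiv c/l\pmod p$, so that $c-z_0l\in p^2\Zp$), the conditional probability of solubility is \emph{not} $1/2$: with $a,b$ and the new $c$-coefficient all in $p^2\Zp$ and $n,d,e\in p\Zp$, the condition $f(x,1)\equiv0\pmod{p^2}$ degenerates to the \emph{linear} condition $(d/p)x+(e/p)\equiv0\pmod p$, which with probability $1-\tfrac1p$ has a simple root lifting to a solution with $z=0$; so the probability in that sub-case is $1-\tfrac1p+O(p^{-2})$, strictly greater than $\tfrac12$. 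Hence that branch of your plan cannot close as you hope. The correct way out is to observe that the sub-case never arises where the $\alpha'$-statement is actually needed: the $\beta'\to\alpha'$ reduction replaces $l$ by $pl$, so every $\alpha'$-configuration reached inside the recursion, and the one to which the lemma is applied in Proposition~\ref{prop:tau23}, has $v(l)\ge2$, whence $c-z_0l\equiv c\pmod{p^2}$ and the shift always lands in a legitimate $\beta'$-configuration with $v(c)=1$. Equivalently, the statement that is both true and sufficient is: $\beta'=\tfrac12$ for all $l\in p\Zp$, and $\alpha'=\tfrac12$ under the extra hypothesis $v(l)\ge2$; your own observation, pushed through, shows that the $\alpha'$-claim genuinely requires some such hypothesis.
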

\begin{proof}
We will show that $\alpha'=\frac{1}{2}(1-1/p)+(1/p)\beta'$ and
$\beta'=\frac{1}{2}(1-1/p)+(1/p)\alpha'$, from which the result
follows.  Note that while, \textit{a priori}, $\alpha'$ and~$\beta'$
might depend on~$(l,m,c)\in p\Zp^3$ and~$(a,b)\in p^2\Zp^2$, we will
see in the proof that this is not the case.

First let $n,d,e\in p\Z_p$ and consider the quadratic
$\frac{1}{p}(cx^2+dx+e)$ over~$\Fp$.  If it has distinct roots, then
we can lift these to obtain $\Q_p$-points with $z=0$; this has
probability~$\frac{1}{2}(1-1/p)$.  If it is irreducible
(probability~$\frac{1}{2}(1-1/p)$) then there are no solutions, while
if it has repeated roots (probability~$1/p$) then after shifting the
root to $x\equiv0$ (see Lemma~\ref{newlemma}), replacing $x,z$ by
$px,pz$ and rescaling the equation we recover the original situation
except that now we only have $n,e\in\Z_p$ with $d\in p\Z_p$.  (The
valuations of $l,a,b$ have increased, but everything here only depends
on them lying in $p\Z_p$ or $p^2\Z_p$ as specified in the statement of
the lemma.)  This gives $\beta'=\frac{1}{2}(1-1/p)+(1/p)\alpha'$.

Now with $n,e\in\Z_p$ and $d\in p\Z_p$, we have
$\Fbar(x,1,z)\equiv z^2+nz-e$.  The equation is soluble if this
quadratic has distinct roots, and insoluble if it has no roots; if it
has a double root then shifting the root to~$z\equiv0$ leads us back
to the first case.  Hence
$\alpha'=\frac{1}{2}(1-1/p)+(1/p)\beta'$, as required.
\end{proof}

\subsubsection{Evaluation of $\sigma_3$ and $\sigma_3^*$}
\label{subsubsec:sigma3}

Now we consider generalized binary quartics~$F$ whose reduction
modulo~$p$ factors as $\Fbar = (z-s_1(x,y)(z-s_2(x,y))$, where~$s_1$
and~$s_2$ are conjugate binary quadratics over~$\F_{p^2}$.  Let
$\omega\in\F_{p^2}\setminus\Fp$ and denote by $\overline{\omega}$ its
Galois conjugate; then we may write $s_1(x,y)=r(x,y)+\omega s(x,y)$
and~$s_2=r(x,y)+\overline{\omega}s(x,y)$, where $r,s$ are binary
quadratic forms over~$\Fp$.  Replacing $z$ by $z+r(x,y)$, we may
assume without loss of generality that $r=0$, so now $\Fbar =
(z-\omega s(x,y))(z-\overline\omega s(x,y))$.
The only $\Fp$-points are those with $z\equiv s(x,y)\equiv0\pmod{p}$,
which are singular.  The probability of solubility now depends on the
factorization of~$s$ over~$\Fp$.  Under condition~$(*)$, the leading
coefficient of~$s$ must be non-zero, and the only difference between
the two cases ($\sigma_3$ and $\sigma_3^*$) arises from the different
probabilities of each factorization pattern occurring, depending on
whether $s$ is an arbitrary binary quadratic form over~$\Fp$, or is
restricted to those whose leading coefficient is non-zero.

In the case where~$s$ has distinct roots modulo~$p$, we must take care
to show that the probabilities that the two singular $\Fp$-points
lift are independent.

\begin{lemma}
\label{lem1}
Suppose that $F$ is a generalized binary quartic over~$\Zp$ whose
reduction modulo~$p$ factors over~$\F_{p^2}$ as $\Fbar = (z-\omega
s)(z-\overline\omega s)$, with $s\in\Fp[x,y]$ a binary quadratic form
having distinct roots over~$\Fp$.  Then the curve~$\Cbar$ over~$\Fp$
defined by~$F$ has two $\Fp$-points, both singular, and the
probability that at least one of these points lifts to a $\Qp$-point
is $(2p+1)/(p+1)^2$.
\end{lemma}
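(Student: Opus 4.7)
The plan is to compute the lifting probability at each singular point separately via Lemma~\ref{lem:two-lemmas-1}, argue that the two lifting events are independent, and combine them by inclusion--exclusion. By Lemma~\ref{newlemma} I may assume that the two $\Fp$-roots of $s$ lie at $[1:0]$ and $[0:1]$, so that $s(x,y) = \lambda xy$ for some $\lambda \in \Fp^\times$ and the singular $\Fp$-points of $\Cbar$ are $P_1 = [0:1:0]$ and $P_2 = [1:0:0]$. Expanding $\Fbar = (z - \omega\lambda xy)(z - \overline{\omega}\lambda xy)$ yields $l, n, a, b, d, e \in p\Zp$, while $\bar m, \bar c \in \Fp$ satisfy the condition that $z^2 + \bar m z - \bar c = (z - \omega\lambda)(z - \overline{\omega}\lambda)$ is irreducible over~$\Fp$, which is precisely the side condition of Lemma~\ref{lem:two-lemmas-1}.

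Let $A_i$ denote the event that $P_i$ lifts to a $\Qp$-point. In the chart $y = 1$, a lift of $P_1$ is a solution $(x, z) \in p\Zp \times p\Zp$ of $F(x, 1, z) = 0$, which is exactly the setup of Lemma~\ref{lem:two-lemmas-1} in the $n, d, e \in p\Zp$ case; hence $P(A_1) = \oldalpha = 1/(p+1)$. Applying the same lemma in the chart $x = 1$, with the roles of $(l,a,b)$ and $(n,e,d)$ interchanged, gives $P(A_2) = 1/(p+1)$. To show $A_1$ and $A_2$ are independent, I would condition on $(m, c)$: under the Haar measure, $(n, d, e)$ and $(l, a, b)$ are then independent uniform elements of $(p\Zp)^3$. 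The iterative analysis in Lemma~\ref{lem:two-lemmas-1} determines each $A_i$-outcome via a sequence of smoothness tests on reduced conics whose coefficients are built from $(m, c)$ together with successive $p$-adic refinements of $(n, d, e)$ for $A_1$ and of $(l, b, a)$ for $A_2$. Hence the two recursions consume randomness from disjoint coordinate slices, and one concludes $P(A_1 \cap A_2 \mid m, c) = 1/(p+1)^2$; since this holds for every admissible $(m, c)$, the same value holds unconditionally. Inclusion--exclusion then yields
\[
P(A_1 \cup A_2) = \frac{2}{p+1} - \frac{1}{(p+1)^2} = \frac{2p+1}{(p+1)^2}.
\]

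The main obstacle is making the independence argument fully rigorous: at iterations beyond the first, the shift to the singular $\Fp$-point of the reduced conic in the $A_1$-recursion does, strictly speaking, introduce $(l, a, b)$-data into the updated equation, and symmetrically for $A_2$. A robust alternative is to set up a joint state-machine recursion whose states are pairs in $\{\oldalpha, \oldbeta\}^2$ (one component tracking each of the two single-point recursions of Lemma~\ref{lem:two-lemmas-1}) and to verify step by step, using Lemma~\ref{newlemma} to handle the measure-preserving shifts, that the transition probabilities factor as products. Writing $\pi_{\oldalpha\oldalpha}$ for the joint lifting probability from the initial state and $\pi_{\oldbeta\oldbeta}$ for the analogous quantity starting at $(\oldbeta, \oldbeta)$, one obtains $\pi_{\oldalpha\oldalpha} = (1/p^2)\,\pi_{\oldbeta\oldbeta}$ and $\pi_{\oldbeta\oldbeta}\bigl(1 - 1/p^4\bigr) = (1-1/p)^2 + 2(1-1/p)/\bigl(p(p+1)\bigr)$, which solves directly to $\pi_{\oldalpha\oldalpha} = 1/(p+1)^2$, confirming the answer.
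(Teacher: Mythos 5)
Your overall structure --- normalize $s$ to $xy$, obtain the lifting probability $1/(p+1)$ at each singular point from Lemma~\ref{lem:two-lemmas-1}, prove that the two lifting events are independent, and finish by inclusion--exclusion --- is exactly the paper's, and you have correctly isolated independence as the one genuine difficulty. But your treatment of that difficulty is not yet a proof. The ``disjoint coordinate slices'' argument fails for the reason you yourself flag: the event that $(0{:}1{:}0)$ lifts is \emph{not} a function of $(n,d,e,m,c)$ alone (the equation $F(x,1,z)=0$ involves $l,a,b$, and the successive blow-ups mix the coefficients), and symmetrically for $(1{:}0{:}0)$, so the two events do not live on disjoint factors of the probability space. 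Your fallback --- the joint two-state recursion --- writes down precisely the equations that \emph{would} hold if every transition probability factored as a product (and indeed they solve to $1/(p+1)^2$, as one checks), but that factorization is the whole content of the claim and you only assert it. Concretely: in the transition out of the joint ``smooth-conic'' state, the shift moving the singular point of the conic at $(0{:}1{:}0)$ to the origin is a substitution $x\mapsto x+x_0y$, $z\mapsto z+z_0y^2$ whose effect perturbs $b$, $c$, $d$, i.e.\ the data driving the recursion at the other point; and the branches in which one point succeeds while the other continues require knowing that conditioning on the first outcome does not bias the second. So the gap is real, not merely expository.

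The paper closes this gap with a shorter device that you could adopt in place of the state machine: condition on the event that $(1{:}0{:}0)$ lifts, and apply a substitution $y\mapsto y+rx$, $z\mapsto z+sx^2$ with $r,s\in p\Zp$ to move the witness point to exactly $(1{:}0{:}0)$, which forces $a=0$. By the proof of Lemma~\ref{newlemma} this substitution acts on $(n,d,e)\in p\Zp^3$ in a measure-preserving way, and since the conclusion of Lemma~\ref{lem:two-lemmas-1} does not depend on the fixed values of $(l,a,b)\in p\Zp^3$, the conditional probability that $(0{:}1{:}0)$ lifts is still $1/(p+1)$. That single observation yields independence directly and avoids having to control the step-by-step interaction of the two recursions.
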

\begin{proof}
Moving the roots of~$s$ to $(0:1)$ and $(1:0)$, we may assume
without loss of generality that $s=xy$.  Let $l,m,n,a,b,c,d,e\in \Zp$
be the coefficients of~$F$.  Then we have $l,n,a,b,d,e\in p\Zp$, and
$z^2+mz-c$, being the minimal polynomial of~$\omega$, is irreducible
over~$\Fp$.  The only~$\Fp$-points are the two singular points,
$(0:1:0)$ and~$(1:0:0)$.

The probability that~$(0:1:0)$ lifts to a $\Qp$-point
is~$\oldalpha=1/(p+1)$, since we are in the situation of
Lemma~\ref{lem:two-lemmas-1} with~$n,d,e\in p\Zp$.  By symmetry, the probability
that~$(1:0:0)$ lifts to a $\Qp$-point is also~$\oldalpha$.  Provided
that these probabilities are independent, the probability that at
least one of the points lifts is $1-(1-\oldalpha)^2 = (2p+1)/(p+1)^2$, as
stated.

If $(1:0:0)$ lifts to a $\Qp$-point then, by substitutions of the form
$y \mapsto y + rx$ and $z \mapsto z + s x^2$ where $r,s \in p \Zp$, we
may assume that the $\Qp$-point is $(1:0:0)$, and so $a = 0$. By the
proof of Lemma~\ref{newlemma}, the effect of such a transformation on
$(n,d,e) \in p \Zp^3$ is measure preserving. Then applying
Lemma~\ref{lem:two-lemmas-1} with $a = 0$ shows that the probability
$(0:1:0)$ lifts to a $\Qp$-point is still~$\beta$.
Therefore the two probabilities of lifting are
indeed independent, as claimed.
\end{proof}

In the case where $s$ has a double root modulo~$p$, our argument
expresses the probability of solubility in terms of~$\rho^*$.

\begin{lemma}
\label{lemB}
Suppose that $F$ is a generalized binary quartic over~$\Zp$ whose
reduction modulo~$p$ factors over~$\F_{p^2}$ as a product of two
conjugate factors, in the form $\Fbar=(z-\omega s)(z-\overline \omega
s)$, where $s$ is a binary quadratic form over~$\Fp$ with a double
root over~$\Fp$.  Then the curve~$\Cbar$ over~$\Fp$ defined
by~$\Fbar$ has only one $\Fp$-point, and the probability~$\lambda$
that this point lifts to a $\Qp$-point satisfies
\[
\lambda =  \frac{2p^5-p^4+p^2-2p}{2p^6} + \frac{\rho^*}{p^6}.
\]
\end{lemma}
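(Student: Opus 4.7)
The plan is to resolve the unique singular $\F_p$-point $(0:1:0)$ by a blow-up substitution, then carry out a case analysis of the resulting mod-$p$ equation, ultimately identifying $\rho^*$ in the deepest sub-case of a recursion. I sketch the argument assuming $p$ is odd; the same strategy carries over to $p=2$ with modifications.

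By Lemma~\ref{newlemma} I may move the double root of $s$ to $(0:1)$, so $s$ is a nonzero scalar multiple of $x^2$; the reduction then has the form $\bar F \equiv z^2 + \bar l\, x^2 z - \bar a\, x^4 \pmod p$ with $z^2 + \bar l z - \bar a$ irreducible by $(*)$. The only $\F_p$-point is thus the singular $(0:1:0)$, and a $\Q_p$-lift must be $(p\xi, 1, pz_0)$ for some $\xi, z_0 \in \Z_p$. Substituting and comparing valuations in $F(p\xi, 1, pz_0)$ forces $e \in p^2 \Z_p$ (probability $1/p$); after dividing by $p^2$ I obtain $G(\xi, z_0)$ whose reduction in the chart $y=1$ is $z_0^2 + \bar n' z_0 - \bar d' \xi - \bar e_2$, with $\bar n', \bar d', \bar e_2$ uniform in $\F_p$. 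I split on this reduction: if $\bar d' \neq 0$, the affine curve is a smooth conic and Hensel lifts any of its $p$ smooth $\F_p$-points; if $\bar d' = 0$, a further sub-analysis on the discriminant $\bar n'^2 + 4\bar e_2$ applies, with nonzero-square yielding Hensel lifts, non-square yielding no lift (condition $(*)$ rules out the only point at infinity by forcing the corresponding quadratic in $z$ to remain irreducible there), and zero discriminant producing a double line of singular points.

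In this last case I shift $w = z_0 + n'/2$ (necessarily $w \in p\Z_p$), substitute $w = pv$, and divide by $p$. The resulting polynomial $H(\xi, v) = G/p$ reduces modulo $p$ to a quadratic $Q(\xi) = A_2 \xi^2 + A_1 \xi + A_0$ whose coefficients are independently uniform in $\F_p$. A root analysis of $Q$ completes the enumeration: simple roots (quadratic or linear) give Hensel lifts; absence of roots gives no lift; a double root matches the hypotheses of Lemma~\ref{two-lemmas-2} after substituting $\xi = p\xi'$ and dividing by $p$, yielding probability $1/2$; and the degenerate case $Q \equiv 0$ permits a further division by $p$, producing a new generalized binary quartic $F_{\text{new}}$ still satisfying condition $(*)$, with the same reductions $(\bar l, \bar a)$, and with the remaining coefficients independently uniform in $\Z_p$ except for the constraint $n_{\text{new}} = 0$.

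The main obstacle is to identify the conditional lift probability in this final sub-case with $\rho^*$; I do so by noting that the measure-preserving substitution $v \mapsto v - (n/2)y^2$ (Lemma~\ref{newlemma}) sends a generic generalized binary quartic with uniform $n$ to one with $n = 0$ by an affine shift of $(c, d, e)$, so that the distribution of $F_{\text{new}}$ has the same $\Q_p$-solubility probability as the generic distribution defining $\rho^*$. Weighting the six sub-sub-cases by their probabilities and combining with the earlier case contributions then yields $\lambda = \frac{1}{p}\left(1 - \frac{1}{2p} + \frac{1}{2p^3} - \frac{1}{p^4} + \frac{\rho^*}{p^5}\right)$, which simplifies to the closed form stated in the lemma.
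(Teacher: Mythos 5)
Your argument is correct and follows essentially the same route as the paper's: blow up the unique singular point $(0:1:0)$, force $v(e)\ge 2$, and recurse through a valuation analysis that bottoms out at $\rho^*$, with Lemma~\ref{two-lemmas-2} handling the double-root sub-case; your grouping of the later steps by the factorization type of $Q=\overline{(c\xi^2+d\xi+e)/p}$ is just a regrouping of the paper's case-by-case table (your six sub-sub-cases recombine to give exactly the paper's $\lambda_4,\lambda_5,\lambda_6$), and your intermediate formula agrees with the paper's $\lambda_2$. The only caveat is that you defer $p=2$, whereas the lemma is needed for all primes and the paper's version of the argument (shifting roots rather than completing squares, and counting split/irreducible/double-root quadratics, whose probabilities are the same over $\F_2$) is written to apply uniformly.
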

\begin{proof}
Moving the repeated root of~$s$ to $(0:1)$, we may assume without
loss of generality that~$s=x^2$.  Let $l,m,n,a,b,c,d,e\in \Zp$ be the
coefficients of~$F$.  Then we have $m,n,b,c,d,e\in p\Zp$, and
$z^2+lz-a$, being the minimal polynomial of~$\omega$, is irreducible
modulo~$p$, so that condition~$(*)$ holds.  The only $\Fp$-point is
the singular point $(0:1:0)$, and we need to determine the probability
$\lambda$ that this point lifts.

The situation is as shown in the first row of the following table; 
here~$\lambda_i$ denotes the probability of solubility given both
condition~$(*)$ and that the valuations of $l,\dots,e$ satisfy the
conditions in row~$i$; in particular, $\lambda=\lambda_1$.  The values
of~$l$ and~$a$ change from row to row but return to their original
values by line~7; condition~$(*)$ refers always to the original values
of~$l$ and~$a$.
\[ \begin{array}{rlcccccccccc}
& & & l & m & n & & a & b & c & d & e \\
\lambda = &\lambda_1 = \tfrac{1}{p} \lambda_2 & &
  \ge0 & \ge 1 & \ge 1 & &
  \ge0 & \ge 1 & \ge 1 & \ge 1 & \ge 1 \\
&\lambda_2 = (1-\tfrac{1}{p}) +  \tfrac{1}{p} \lambda_3 & &
  \ge1 & \ge 1 & \ge 0 & &
  \ge2 & \ge 2 & \ge 1 & \ge 0 & \ge 0 \\
&\lambda_3 = \tfrac{1}{2}(1-\tfrac{1}{p}) +  \tfrac{1}{p} \lambda_4 & &
  \ge1 & \ge 1 & \ge 0 & &
  \ge2 & \ge 2 & \ge 1 & \ge 1 & \ge 0 \\
&\lambda_4 = \tfrac{1}{2}(1-\tfrac{1}{p}) +  \tfrac{1}{p} \lambda_5 & &
  \ge1 & \ge 1 & \ge 1 & &
  \ge2 & \ge 2 & \ge 1 & \ge 1 & \ge 1 \\
&\lambda_5 = (1-\tfrac{1}{p}) +  \tfrac{1}{p} \lambda_6 & &
  \ge1 & \ge 1 & \ge 1 & &
  \ge2 & \ge 2 & \ge 2 & \ge 1 & \ge 1 \\
&\lambda_6 = \tfrac{1}{p} \lambda_7 & &
  \ge1 & \ge 1 & \ge 1 & &
  \ge2 & \ge 2 & \ge 2 & \ge 2 & \ge 1 \\
&\lambda_7 = \rho^* & &
  \ge0 & \ge 0 & \ge 0 & &
  \ge0 & \ge 0 & \ge 0 & \ge 0 & \ge 0 \\
\end{array} \]
Assuming the correctness of this table, this gives the desired
expression for $\lambda$ in terms of $\rho^*$.

Now we justify the reduction steps and the relations between
successive $\lambda_i$.  At each step, we determine whether there is a
smooth solution which lifts, or there is no solution, or we continue
to the next line; the probability of continuing to the next line is
always~$1/p$.
\begin{enumerate}
  \item In line 1, condition~$(*)$ implies that every solution has
    $x,z\in p\Zp$; so there are no solutions unless $v(e)\ge2$, which
    has probability~$1/p$.  Replacing $x$ by~$px$ and $z$ by~$pz$, and
    then dividing by~$p^2$, leads to line~2.
  \item The reduced equation is now $z^2+nz\equiv dx+e$, which is
    linear in~$x$.  With probability~$1-1/p$, we have~$v(d)=0$ and a
    solution exists. Otherwise, with probability $1/p$, we have
    $v(d)\ge1$, leading to line~3.
  \item The reduced equation is now $z^2+nz\equiv e$.  If this
    quadratic is irreducible over~$\Fp$, which happens with
    probability $\frac{1}{2}(1-1/p)$, then there are no solutions; 
    if it splits (which happens with the same probability), then there are
    smooth solutions which lift to a $\Qp$-point with~$x=0$. Finally,
    with probability $1/p$, the quadratic has a double root, which we
    shift to $z\equiv0$, leading to line~4.
  \item With probability $1-1/p$ we have $v(c)=1$; by
    Lemma~\ref{two-lemmas-2} the equation is soluble with probability $1/2$.
    Otherwise, with probability $1/p$, we have $v(c)\ge2$, leading to
    line~5.
  \item With probability $1-1/p$ we have $v(d)=1$; then the quartic
    $\frac{1}{p}f$ has a simple root modulo~$p$ (at $x\equiv0$), so we can lift
    it to a $\Qp$-point with $z=0$.  Otherwise, with probability $1/p$,
    we have $v(d)\ge2$, leading to line~6.
  \item As in line~1, we now have no solutions unless $v(e)\ge2$,
    which happens with probability~$1/p$.  At this point $p$ divides
    $l,m$, and~$n$, while $p^2$ divides $a,b,c,d$, and~$e$, so we may
    scale the equation to obtain line~7.
  \item We now have a generalized binary quartic satisfying
    no conditions other than~$(*)$, because the coefficients $l$
    and~$a$ at the end are the same as at the start, so the
    probability of solubility is~$\rho^*$.
\end{enumerate}
\end{proof}

We can now evaluate both~$\sigma_3$ and $\sigma_3^*$. The rational
functions recorded here, and in the remainder of
Section~\ref{sec:bq_density}, are not necessarily written in lowest
terms, since we prefer to give expressions where the denominator
is as simple as possible.

\begin{proposition}
\label{prop:gbq_conj}
A generalized binary quartic~$F$ over~$\Zp$ whose reduction
modulo~$p$ factors into two conjugate factors over~$\F_{p^2}$ has a
$\Qp$-rational point with probability
\[  \sigma_3 = \frac{(p-1)^2(2p^9 + 3p^8 + 5p^7 + 3p^6 + 5p^5 + 3p^4 +
                                                 4p^3 + 5p^2 + 4p + 1)}
        {2(p^3-1)(p^9-1)}. \]
For such a generalized binary quartic that also satisfies
condition~$(*)$, the probability of having a $\Qp$-rational point is
\[  \sigma_3^* = \frac{(p-1)(2 p^9 + 3 p^8 + 5 p^7 + 5 p^6 + 5 p^5 + 5 p^4 + 4 p^3 + 6 p^2 + 6 p + 2)}
        {2(p+1)^2(p^9-1)}. \]
\end{proposition}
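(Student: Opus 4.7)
The strategy is to condition on the factorization of the binary quadratic form~$s$ over~$\Fp$. As already noted preceding the section, after the substitution $z \mapsto z + r(x,y)$ (interpreted via any lift of~$r$ to~$\Zp[x,y]$), which preserves measure and solubility by Lemma~\ref{newlemma}, we may assume $\Fbar = (z-\omega s)(z-\overline\omega s)$. The $\Fp$-points of $\Cbar$ are then precisely the (singular) points with $z\equiv 0$ and $s(x,y)\equiv 0$, so the conditional probability of $\Qp$-solubility depends only on the factorization of~$s$ over~$\Fp$. One distinguishes three cases: (i)~$s$ has two distinct roots in $\P^1(\Fp)$; (ii)~$s$ is irreducible over~$\Fp$; (iii)~$s$ has a double root in $\P^1(\Fp)$.

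In case~(ii) the curve $\Cbar$ has no $\Fp$-points, so the conditional probability is~$0$. In case~(i) Lemma~\ref{lem1} gives $(2p+1)/(p+1)^2$. In case~(iii) Lemma~\ref{lemB} gives
\[
\lambda \;=\; \frac{2p^5-p^4+p^2-2p}{2p^6} \;+\; \frac{\rho^*}{p^6}.
\]
A routine count of nonzero binary quadratic forms shows that among the $p^3-1$ such forms the numbers in cases~(i), (ii), (iii) are $p(p+1)(p-1)/2$, $p(p-1)^2/2$, and $p^2-1$ respectively, while among the $p^2(p-1)$ nonzero forms satisfying condition~$(*)$ (equivalently, with nonzero $x^2$-coefficient) the corresponding counts are $p(p-1)^2/2$, $p(p-1)^2/2$, and $p(p-1)$. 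Weighting the three case probabilities by these counts yields expressions for~$\sigma_3$ and~$\sigma_3^*$ that are affine-linear in the single unknown~$\rho^*$.

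It remains to solve for~$\rho^*$. From Corollary~\ref{cor:gbq_probs} and Proposition~\ref{prop:gbq_absirred},
\[
\rho^* \;=\; \xi_1^*\sigma_1^* + \xi_3^*\sigma_3^* \;=\; \frac{p^2-1}{p^2} + \frac{\sigma_3^*}{p^2}.
\]
Combining this with the affine expression for~$\sigma_3^*$ in terms of~$\rho^*$ yields a linear equation in~$\rho^*$ alone; since the coefficient of~$\rho^*$ on the right-hand side is~$1/p^9$, solving introduces the denominator~$p^9-1$. Substituting the resulting value of~$\rho^*$ back into the formulas for~$\sigma_3^*$ and~$\sigma_3$ produces the claimed rational functions. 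The main obstacle is purely algebraic: gathering over the common denominators and verifying that the numerators collapse to the stated polynomials. The factorization $p^9-1 = (p-1)(p^2+p+1)(p^6+p^3+1)$, combined with the factor~$(p+1)^2$ inherited from Lemma~\ref{lem1}, accounts for the shape of the denominators $(p+1)^2(p^9-1)$ for~$\sigma_3^*$ and $(p^3-1)(p^9-1)$ for~$\sigma_3$.
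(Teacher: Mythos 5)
Your proposal is correct and follows essentially the same route as the paper: the same three-way case division on the factorization of~$s$, the same appeals to Lemma~\ref{lem1} and Lemma~\ref{lemB}, the same weights (your counts over $p^3-1$ and $p^2(p-1)$ agree with the paper's conditional probabilities), and the same closing of the linear system via $\rho^*=\xi_1^*+\xi_3^*\sigma_3^*$ with the $1/p^9$ coefficient producing the $p^9-1$ denominator. No gaps.
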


\begin{proof}
Recall that we may assume without loss of generality that $\Fbar =
(z-\omega s(x,y))(z-\overline\omega s(x,y))$, where $s$ is a binary
quadratic form over~$\Fp$, and that the only $\Fp$-points are those
with $z\equiv s(x,y)\equiv0\pmod{p}$, which are singular.  We divide
into cases according to the factorization of~$s$ over~$\Fp$.

\begin{itemize}
\item If $s$ irreducible over $\Fp$, then there are no $\Fp$-points
  and the probability of solubility is~$0$.  This occurs with
  probability $\frac{1}{2}p(p-1)^2/(p^3-1)$ in general and with probability 
  $\frac{1}{2}(p-1)/p$ under condition~$(*)$.
\item The binary quadratic form $s$ splits into distinct factors over
  $\Fp$ with probability $\frac{1}{2}p(p^2-1)/(p^3-1)$ in general, or
  $\frac{1}{2}(p-1)/p$ under~$(*)$.  In this case there are two $\Fp$-points,
  one for each root of~$s$, and by Lemma~\ref{lem1}, the probability
  that at least one of these two $\Fp$-points lifts to~$\Qp$ is
  $(2p+1)/(p+1)^2$.
\item The binary quadratic form $s$ has a repeated factor over $\Fp$
  with probability $(p^2-1)/(p^3-1)$ in general, or $1/p$ under~$(*)$.
  In this case there is only one $\Fp$-point, corresponding to the
  unique root of~$s$, and Lemma~\ref{lemB} expresses the
  probability~$\lambda$ that it lifts to a $\Qp$-point in terms of
  $\rho^*$.
\end{itemize}

Combining the cases, we find that
\begin{equation}
  \label{eqn:sigma3-lambda}
  \sigma_3 = \frac{p(p^2-1)}{2(p^3-1)}\cdot\frac{(2p+1)}{(p+1)^2} +
  \frac{p^2-1}{p^3-1}\cdot\lambda
\end{equation}
while
\begin{equation}
  \label{eqn:sigma3*-lambda}
  \sigma_3^* = \frac{(p-1)}{2p}\cdot\frac{(2p+1)}{(p+1)^2} +
  \frac{1}{p}\cdot\lambda.
\end{equation}

Recall also that
\begin{equation}
  \label{eqn:sigma3*-rho*}
\rho^* = \xi_1^*\sigma_1^* + \xi_3^*\sigma_3^*
= \xi_1^* + \xi_3^*\sigma_3^*,
\end{equation}
(since $\sigma_1^*=1$ by Proposition~\ref{prop:gbq_absirred}), and the
values of $\xi_1^*$ and $\xi_3^*$ are given in
Corollary~\ref{cor:gbq_probs}.  The linear equations
\eqref{eqn:sigma3*-lambda} and~\eqref{eqn:sigma3*-rho*}, together with
Lemma~\ref{lemB}, can now be solved for~$\lambda$, $\sigma_3^*$ and
$\rho^*$, and finally equation~\eqref{eqn:sigma3-lambda}
gives~$\sigma_3$.
\end{proof}

\begin{corollary}
The probability that a generalized binary quartic~(\ref{eq:gen_bq})
that satisfies condition~$(*)$ is $\Qp$-soluble is
\[ \rho^* = \frac{p(p-1)(2 p^{9} + 6 p^{8} + 6 p^{7} + 4 p^{6} + 3 p^{5} + 5 p^{4} + 5 p^{3} + 5 p^{2} + 5 p + 2)}
               {2(p+1)^2(p^9-1)}. \]
\end{corollary}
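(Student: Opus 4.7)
The plan is to observe that the proof of Proposition~\ref{prop:gbq_conj} has already set up (and implicitly solved) a $3\times3$ linear system in the unknowns $\lambda$, $\sigma_3^*$, $\rho^*$, and that the corollary amounts to extracting one coordinate of the solution. So the proof reduces to pointing back at that system and writing down the value of $\rho^*$ in its stated simplified form.

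Concretely, I would first substitute $\xi_1^*=(p^2-1)/p^2$ and $\xi_3^*=1/p^2$ (from Corollary~\ref{cor:gbq_probs}) into equation~\eqref{eqn:sigma3*-rho*}, yielding the linear relation
\[
\sigma_3^* = p^2\rho^* - (p^2-1).
\]
Next, I would use equation~\eqref{eqn:sigma3*-lambda} to solve for $\lambda$ in terms of $\sigma_3^*$, namely
\[
\lambda = p\,\sigma_3^* - \frac{(p-1)(2p+1)}{2(p+1)^2},
\]
and then combine these two identities to express $\lambda$ purely in terms of $\rho^*$.

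The final step is to substitute this expression for $\lambda$ into the formula $\lambda = (2p^5-p^4+p^2-2p)/(2p^6) + \rho^*/p^6$ from Lemma~\ref{lemB}. This produces a single linear equation in $\rho^*$ whose leading coefficient is $p^3 - 1/p^6 = (p^9-1)/p^6$, so one can clear denominators and solve directly, obtaining $\rho^*$ as a rational function with denominator $2(p+1)^2(p^9-1)$.

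The only real work is the polynomial bookkeeping required to verify that the numerator simplifies to $p(p-1)(2p^{9}+6p^{8}+6p^{7}+4p^{6}+3p^{5}+5p^{4}+5p^{3}+5p^{2}+5p+2)$, so this is the step I expect to be the main (if entirely mechanical) obstacle; a computer algebra check is the natural way to confirm the identity, after which one simply records the result.
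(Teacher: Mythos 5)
Your proposal is correct and follows essentially the same route as the paper: the corollary is obtained there as a by-product of the proof of Proposition~\ref{prop:gbq_conj}, by solving the linear system formed by equations~\eqref{eqn:sigma3*-lambda} and~\eqref{eqn:sigma3*-rho*} together with Lemma~\ref{lemB} for $\lambda$, $\sigma_3^*$, and $\rho^*$. Your explicit eliminations (and the leading coefficient $(p^9-1)/p^6$) are exactly right, and the resulting numerator does simplify to the stated form.
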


For reference, we also record here the value of~$\lambda$ (defined in
Lemma~\ref{lemB}):
\[
\lambda = \frac{ 2 p^{10} + 3 p^{9} - p^{5} + 2 p^{4} - 2 p^{2} - 3 p - 1}{2(p+1)^2(p^9-1)}.
\]

\subsubsection{Evaluation of $\sigma_4$}
\label{subsubsec:sigma4}

We now evaluate $\sigma_4$, the probability of solubility given that
the generalized binary quartic~$F$ has repeated factors over~$\Fp$,
i.e., $F\equiv(z-s)^2\pmod{p}$ for some binary quadratic form~$s$.
Replacing $z$ by $z+s$ does not change densities, so we may assume
that $s\equiv0\pmod{p}$, so that $F\equiv z^2\pmod{p}$ or,
equivalently, that all eight coefficients of~$F$ lie in~$p\Zp$.

Now all solutions $(x:y:z)$ must have $z\in p\Zp$ and
$f(x,y)\equiv0\pmod{p^2}$.  Writing $f_1=\frac{1}{p}f$, we see that each solution
satisfies $f_1(x,y)\equiv z\equiv0\pmod{p}$.  We divide into cases
according to the factorization of $\fpbar$, the reduction of~$f_1$
modulo~$p$.

If $\fpbar=0$, all coefficients of~$f$ are divisible by~$p^2$, and we
may replace $z$ by~$pz$ and divide through by~$p^2$ to obtain an
arbitrary generalized binary quartic over~$\Zp$.  The probability of
this is $1/p^5$, and the probability of solubility in this
case is just~$\rho$.  Otherwise, for $0\le i\le4$, let $\tau_i$ be the
probability of solubility given each possible factorization pattern
for $\fpbar$, with the cases numbered as in Lemma~\ref{lem:q_counts}.
Then
\begin{equation}\label{eqn:rho4-from-rho}
   \sigma_4 = \frac{1}{p^5}\rho +
   \left(1-\frac{1}{p^5}\right)\sum_{i=0}^4\eta_i\tau_i,
\end{equation}
where the $\eta_i$ are as in Corollary~\ref{cor:q_probs}.

We also let $\sigma_4'$ be the probability that~(\ref{eq:gen_bq}) is
$\Qp$-soluble assuming again that all coefficients lie in~$p \Zp$
and also that~$v_p(a)=1$.  Then
\[
   \sigma_4' = \sum_{i=0}^{4}\eta_i' \tau_i,
\]
where the $\eta_i'$ are also as in Corollary~\ref{cor:q_probs}.

We will evaluate each~$\tau_i$; then (\ref{eqn:rho-from-rho4}) and
(\ref{eqn:rho4-from-rho}) will give two linear equations relating~$\rho$
and~$\sigma_4$, from which their values will then be uniquely determined.

The first two cases are easy.
\begin{proposition}
  \label{prop:tau01}
We have $\tau_0=0$ and $\tau_1=1$.
\end{proposition}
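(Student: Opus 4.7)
The plan is to handle $\tau_0$ and $\tau_1$ separately, but both hinge on the same preliminary observation about how integral solutions must reduce modulo $p$. Since all eight coefficients of $F$ lie in $p\Zp$, reducing $F(x,y,z)=0$ modulo $p$ gives $z^2\equiv 0\pmod p$; hence any primitive integral $(x,y,z)$ satisfying $F(x,y,z)=0$ must have $z\in p\Zp$. Then $z^2+h(x,y)z\in p^2\Zp$, forcing $f(x,y)\in p^2\Zp$, that is, $f_1(x,y)\in p\Zp$, or equivalently $\fpbar(x,y)\equiv 0 \pmod p$. Since $(x,y)$ is primitive modulo $p$, this exhibits an $\Fp$-root of $\fpbar$ at the reduction of $(x:y)$.

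The vanishing $\tau_0=0$ is then immediate: by hypothesis $\fpbar$ has no root in $\P^1(\Fp)$, so the preliminary observation rules out any primitive integral solution. For $\tau_1=1$, the plan is to invoke Hensel's lemma at the assumed simple root. Using Lemma~\ref{newlemma}, first move the simple root of $\fpbar$ to $(0:1)\in\P^1(\Fp)$; then $f_1(x,1)\in\Zp[x]$ has a simple root at $x\equiv 0$, so its derivative at $0$ is a $p$-adic unit while its value at $0$ has positive valuation. Hensel's lemma yields $x^*\in p\Zp$ with $f_1(x^*,1)=0$, hence $f(x^*,1)=pf_1(x^*,1)=0$, so $(x^*:1:0)$ is a $\Qp$-rational point on the curve.

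I do not anticipate a serious obstacle: both parts follow essentially from Hensel's lemma, used to obstruct solutions in one case and to produce them in the other, the common engine being that having all coefficients of $F$ divisible by $p$ forces $z\equiv 0\pmod p$ and so reduces the problem to finding a root of $f_1$. The only item requiring any care is confirming that the coordinate change placing the simple root at $(0:1)$ preserves the measure-theoretic setup, which is exactly the content of Lemma~\ref{newlemma}.
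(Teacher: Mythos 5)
Your proof is correct and follows essentially the same route as the paper: the observation that $z\in p\Zp$ forces $f(x,y)\equiv0\pmod{p^2}$ (already set up in the paper just before the proposition) rules out solutions when $\fpbar$ has no root, and Hensel's lemma lifts a simple root of $\fpbar$ to a $\Qp$-point with $z=0$. Your version merely spells out the details that the paper leaves implicit.
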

\begin{proof}
If $\fpbar$ has no roots in $\Fp$, then $f(x,y)\equiv0\pmod{p^2}$ has
no solutions, giving $\tau_0=0$.

If $\fpbar$ has a simple root in $\Fp$, it lifts to a root in $\Qp$,
giving a $\Qp$-point with $z=0$, so $\tau_1=1$.
\end{proof}

The next two cases are handled using Lemma~\ref{two-lemmas-2}.

\begin{proposition}
  \label{prop:tau23}
We have $\tau_2=1/2$ and $\tau_3=3/4$.
\end{proposition}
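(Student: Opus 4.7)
My plan is to handle each case by reducing to Lemma~\ref{two-lemmas-2}; Lemma~\ref{newlemma} will first let me place the repeated roots of~$\fpbar$ at fixed coordinate points.

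For~$\tau_2$, I would move the unique double root of~$\fpbar$ to $(0{:}1)$. Irreducibility of the cofactor of~$x^2$ in~$\fpbar$ then forces $v(a)=v(c)=1$ and $v(b)\ge 1$, while the vanishing of the simple-root terms forces $v(d),v(e)\ge 2$. Since any $\Qp$-point lies above $(0{:}1)$ with $z\equiv 0\pmod p$, I would set $y=1$ and substitute $(x,z)\mapsto(px_1,pz_1)$; dividing through by~$p^2$ produces a new equation whose coefficients are of the form required by Lemma~\ref{two-lemmas-2}, with fixed coefficients $(\tilde l,\tilde m,\tilde c,\tilde a,\tilde b)=(lp,m,c,ap^2,bp)$ satisfying $v(\tilde c)=1$, and varying coefficients $(\tilde n,\tilde d,\tilde e)=(n/p,\,d/p,\,e/p^2)$ uniformly distributed on $\Zp\times p\Zp\times\Zp$. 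Lemma~\ref{two-lemmas-2} then gives $\tau_2=\alpha'=\tfrac12$.

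For~$\tau_3$, I would move the two double roots to $(0{:}1)$ and $(1{:}0)$, so that $\fpbar$ is a nonzero scalar times $x^2y^2$; this forces $v(c)=1$ and $v(a),v(b),v(d),v(e)\ge 2$. The reduced curve~$\Cbar$ has exactly two $\Fp$-points, $(1{:}0{:}0)$ and $(0{:}1{:}0)$, both singular, and I would let $A$ and $B$ denote the events that each lifts to a $\Qp$-point. Running the $\tau_2$-style reduction separately around each singular point (using $(y,z)\mapsto(py_1,pz_1)$ for~$A$ and $(x,z)\mapsto(px_1,pz_1)$ for~$B$) places each case in the setting of Lemma~\ref{two-lemmas-2} with $v(\tilde c)=v(c)=1$, so that $P(A)=P(B)=\tfrac12$.

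To finish I need $P(A\cap B)=\tfrac14$, for which I would follow the argument from Lemma~\ref{lem1}. Assuming~$A$ with a $\Qp$-point $(1{:}py_1^*{:}pz_1^*)$, the substitutions $y\mapsto y+py_1^*x$ and $z\mapsto z+pz_1^*x^2$ normalize this point to $(1{:}0{:}0)$, which forces $a=0$. By the proof of Lemma~\ref{newlemma} these substitutions preserve the Haar measure on $(n,d,e)$ and leave the type~3 configuration (in particular $v(\tilde c)=1$) intact, so a second application of Lemma~\ref{two-lemmas-2} yields $P(B\mid A)=\tfrac12$ and hence $\tau_3=1-(1-\tfrac12)^2=\tfrac34$. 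The main obstacle is precisely this independence step: one must verify carefully that the normalizing substitution for Case~$A$ both preserves the Haar distribution of the coefficients driving Case~$B$ and respects the valuation conditions needed to reapply Lemma~\ref{two-lemmas-2}; the valuation-tracking needed to match the reduced equations to the hypotheses of Lemma~\ref{two-lemmas-2} is routine but must be done accurately.
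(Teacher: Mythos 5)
Your proof is correct and follows the paper's argument essentially verbatim: both cases are reduced to Lemma~\ref{two-lemmas-2} via the substitution $(x,z)\mapsto(px,pz)$ at a double root moved to $(0{:}1)$, and the independence needed for $\tau_3$ is obtained by the same normalization trick ($a=0$ after a measure-preserving shift) that the paper uses in Lemma~\ref{lem1}. (One slight imprecision: $\Cbar$ is the whole line $z=0$ and so has $p+1$ points, not two; what you mean, correctly, is that only the two points lying above the roots of $\fpbar$ can possibly lift.)
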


\begin{proof}
Each double root of $\fpbar$ over~$\Fp$ lifts to a $\Qp$-point with
probability~$\alpha'=1/2$; indeed, without loss of generality, the double
root is at $(x:y)=(0:1)$, so after replacing $x$ by~$px$ and $z$
by~$pz$, and rescaling the equation, we may apply Lemma~\ref{two-lemmas-2}.

If $\fpbar$ has one double root over~$\Fp$, and no simple roots, then
every $\Qp$-point must arise from lifting the double root, giving
$\tau_2=1/2$.

If $\fpbar$ has two double roots over~$\Fp$, we may assume that they
are $(x:y)=(0:1)$ and~$(1:0)$.  Each lifts to a $\Qp$-point with
probability~$1/2$ by Lemma~\ref{two-lemmas-2}, and examination of the proof of
Lemma~\ref{two-lemmas-2} shows that (just as in Lemma~\ref{lem1}) the two
probabilities are independent.  Hence $\tau_3=1-(1-1/2)^2=3/4$.
\end{proof}

\subsubsection{Evaluation of $\tau_4$}
\label{subsubsec:tau4}

Finally, we consider the case where $\fpbar$ has a quadruple root.
\begin{proposition}
\label{prop:tau4}
 We have $\tau_4=\dfrac{4 p^{10} + 8 p^9 - 4 p^8 + 4 p^6 - 3 p^4 + p^3 - 5 p -
    5}{8 (p+1) (p^9-1)}$.
\end{proposition}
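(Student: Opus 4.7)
I plan to adapt the strategy of Lemma~\ref{lemB}: set up a chain of substitutions and coordinate shifts expressing $\tau_4$ in terms of previously computed probabilities, and solve the resulting linear equation.

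After invoking Lemma~\ref{newlemma} to place the quadruple root of $\fpbar$ at $(0:1)$, the $\tau_4$ state is $v(l),v(m),v(n)\ge 1$, $v(a) = 1$, $v(b),\dots,v(e) \ge 2$, uniform within these constraints. A direct valuation argument (using that $f(1,y)$ is dominated by $a$ with $v(a)=1$) rules out $\Qp$-solutions on the $x = 1$ chart, so any $\Qp$-solution has $y \in \Zp^\times$ (WLOG $y = 1$) with $v(x), v(z) \ge 1$. Substituting $x \to px$, $z \to pz$ and dividing by $p^2$ transforms $F$ into a generalized binary quartic $F''$ with coefficient valuations $(v(l''),\ldots,v(e'')) \ge (2,1,0,3,3,2,1,0)$ and $v(a'') = 3$ exactly.

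The reduction $\bar{F''}(x,1,z) = z^2 + \bar{n''}z - \bar{e''}$ is independent of $x$, and splits on the discriminant $\bar{n''}^2 + 4\bar{e''}$ with probabilities $(p-1)/(2p)$, $(p-1)/(2p)$, $1/p$ (nonzero-square, non-square, zero). In the nonzero-square case, the two distinct $\Fp$-roots are smooth in $z$ and lift by Hensel for every $x \in \Zp$, contributing $1$. In the non-square case, there are no $\Fp$-zeros on the $y=1$ chart; furthermore the $y=0$ chart has $F''(1,0,z)=z^2+l''z-a''$ with discriminant $l''^2+4a''$ of odd valuation $3$, ruling out solutions there, so this case contributes $0$. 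In the zero case, shift $z \to z + z_0 y^2$ with $z_0 = -n''/2$ to obtain $F''_C$ with all eight coefficients in $p\Zp$; a direct computation shows $\bar{d''_C/p}$ and $\bar{e''_C/p}$ are independently uniform in $\Fp$ while $\bar{a''_C/p}$, $\bar{b''_C/p}$, $\bar{c''_C/p}$ all vanish, giving $\bar{f''_{C,1}} = y^3(\bar{d''_C/p}\,x + \bar{e''_C/p}\,y)$. This splits further: (C.1) $\bar{d''_C/p} \neq 0$ (prob.\ $(p-1)/p$), a simple root of $\bar{f''_{C,1}}$ lifts to give $z = z_0$, soluble; (C.2) $\bar{d''_C/p} = 0$, $\bar{e''_C/p} \neq 0$ (prob.\ $(p-1)/p^2$), where after swapping $x \leftrightarrow y$ we land in the very restrictive sub-state $v(l_s) = \infty$, $v(m_s)\ge 1$, $v(n_s)\ge 2$, $v(a_s) = 1$, $v(b_s)\ge 2$, $v(c_s)\ge 2$, $v(d_s)\ge 3$, $v(e_s) = 3$, which by the same style of valuation argument admits no $\Qp$-solution (the $v(e_s)=3$ term forces $v(f_s(x,1))=3$ for $v(x)\ge 1$ while the LHS valuation is even, and the $x=1$ chart is again ruled out by an odd-valuation discriminant), contributing $0$; (C.3) $\bar{d''_C/p} = \bar{e''_C/p} = 0$ (prob.\ $1/p^2$), with all coefficients of $f''_C$ in $p^2\Zp$, whence rescaling $z \to pz$ and dividing by $p^2$ yields a new state to be analysed further.

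The (C.3) state is then analysed by one or two further rounds of substitution-and-shift, after which its solubility probability closes in terms of $\rho^*$ (from Proposition~\ref{prop:gbq_conj}'s corollary), $\sigma_4'$ (which, via $\sigma_4' = \sum_i \eta_i'\tau_i$ and the known $\tau_0,\ldots,\tau_3$, is itself linear in $\tau_4$), and $\tau_4$ itself. Compiling all contributions gives a linear equation in $\tau_4$; solving with the values from Corollary~\ref{cor:q_probs}, Lemma~\ref{lemB}, and Proposition~\ref{prop:gbq_conj} yields the stated rational function. The main obstacle is the (C.3) analysis: the post-rescaling state lies outside the previously computed regimes, so one must carefully iterate the substitution-and-shift once more and invoke Lemma~\ref{newlemma} to identify it with a combination of $\rho^*$, $\sigma_4'$, and $\tau_4$. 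Verifying the vanishing of the (C.2) contribution by the infinite-descent argument above is also essential for obtaining the correct coefficients in the final formula.
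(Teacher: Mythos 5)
Your overall strategy is the right one and coincides with the paper's: blow up at the quadruple root, track coefficient valuations through a chain of reduction steps, and close the recursion using $\sigma_4'=\sum_i\eta_i'\tau_i$, which is linear in $\tau_4$. Your initial state $(2,1,0;3,3,2,1,0)$ with $v(a'')=3$ is exactly the paper's line~1, and your cases (C.1) and (C.2) reproduce lines~2 and~3 correctly (though your (C.2) argument via swapping $x\leftrightarrow y$ is needlessly roundabout: one can see directly that $v(z)\ge1$ forces the left-hand side to have valuation $\ge2$ while $v(e)=1$ makes the right-hand side have valuation exactly $1$).

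The genuine gap is the analysis of your state (C.3), which you defer to ``one or two further rounds of substitution-and-shift.'' This is where most of the content of the proof lives. After the rescaling, the reduced equation is the (possibly singular) conic $z^2+(mx+n)z\equiv cx^2+dx+e$ with $v(a)=1$, and the case split is governed by the quadratic $z^2+mz-c$ at infinity: when it is irreducible one needs Lemma~\ref{lem:two-lemmas-1} to get solubility with probability $p/(p+1)$ (this is the source of the factor $(p+1)$ in the denominator of $\tau_4$); when it splits there is automatically a smooth point; and when it has a double root two further descent steps land precisely in the state defining $\sigma_4'$ (all eight coefficients in $p\Zp$ with $v(a)=1$). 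Your claim that the recursion ``closes in terms of $\rho^*$, $\sigma_4'$, and $\tau_4$'' is not substantiated and is in fact wrong about $\rho^*$: the correct terminal value is $\sigma_4'$ alone, and $\rho^*$ plays no role in this recursion (it enters only in the evaluation of $\sigma_3$). Without identifying the conic contribution and the terminal state, you cannot write down the linear equation that determines $\tau_4$, so the stated formula is not yet derived. A secondary point: your explicit shift $z_0=-n''/2$ and the discriminant $\bar{n''}^2+4\bar{e''}$ presuppose $p$ odd, whereas $\tau_4$ is needed for all primes, including $p=2$, in Theorem~\ref{thm:genbinquartic}; the completion of the argument should avoid dividing by $2$, e.g.\ by phrasing the case split as irreducible/split/repeated-root rather than via the discriminant.
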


\begin{proof}
Moving the quadruple root of $\fpbar$ to~$0$, without
loss of generality we have $\frac{1}{p}f\equiv a_1x^4\pmod{p}$ where
$a_1=a/p\not\equiv0\pmod{p}$.

Every primitive $\Qp$-point~$(x:y:z)$ must satisfy $x\equiv
z\equiv0\pmod{p}$ and $y\not\equiv0\pmod{p}$, so we replace $x,z$
by~$px,pz$ and divide through by~$p^2$; also, without loss of
generality, we may assume that $y=1$.  This leads to the situation
indicated in the first row of the following table, where $\nu_i$ is
the probability of solubility given that the valuations of $l,\dots,e$
satisfy the conditions in row~$i$ of the table, with $\nu_1=\tau_4$.
\[
\begin{array}{rlcccccccccc}
& & & l & m & n & & a & b & c & d & e \\
\tau_4 = & \nu_1 =  \tfrac{1}{2}(1-\tfrac{1}{p}) + \tfrac{1}{p} \nu_2, & &
\ge2 & \ge1 & \ge0 & &
=3 & \ge 3 & \ge 2 & \ge 1 & \ge 0 \\
& \nu_2 = (1-\tfrac{1}{p}) +  \tfrac{1}{p} \nu_3, & &
\ge2 & \ge1 & \ge1 & &
=3 & \ge 3 & \ge 2 & \ge 1 & \ge 1 \\
& \nu_3 = \tfrac{1}{p} \nu_4, & &
\ge2 & \ge1 & \ge1 & &
=3 & \ge 3 & \ge 2 & \ge 2 & \ge 1 \\
& \nu_4 = \tfrac{1}{2}(1-\tfrac{1}{p}) \frac{p}{p+1} +
\tfrac{1}{2}(1-\tfrac{1}{p}) + \tfrac{1}{p} \nu_5,  & &
\ge1 & \ge0 & \ge0 & &
=1 & \ge 1 & \ge 0 & \ge 0 & \ge 0 \\
& \nu_5 = (1-\tfrac{1}{p}) +  \tfrac{1}{p} \nu_6, & &
\ge1 & \ge1 & \ge0 & &
=1 & \ge 1 & \ge 1 & \ge 0 & \ge 0 \\
& \nu_6 =  \tfrac{1}{2}(1-\tfrac{1}{p}) + \tfrac{1}{p} \nu_7, & &
\ge1 & \ge1 & \ge0 & &
=1 & \ge 1 & \ge 1 & \ge 1 & \ge 0 \\
& \nu_7 = \sigma_4'& &
\ge1 & \ge1 & \ge1 & &
=1 & \ge 1 & \ge 1 & \ge 1 & \ge 1\\
\end{array}
\]
Assuming the correctness of this table, we can express $\tau_4$ in
terms of~$\sigma_4'$.  But we also have
\[
\sigma_4' = \sum_{i=0}^{4}\eta_i'\tau_i =
\eta_1' + \eta_2'\frac{1}{2} + \eta_3'\frac{3}{4} + \eta_4'\tau_4,
\]
using the previously established values of $\tau_i$ for $0\le i\le 3$
and the weights~$\eta_i'$ instead of~$\eta_i$, as given in
Corollary~\ref{cor:q_probs}. Solving the two equations gives the value
of $\tau_4$ as stated in the proposition, and also
\[
\sigma_4' = \frac{5p^{10} + 5p^{9} - p^{7} + 3p^{6} - 4p^{5} + 4p^{3} - 8p - 4}
      {8 (p+1) (p^9-1)}.
\]

Now we justify the reduction steps and the relations between
successive $\nu_i$.  At each step, we determine whether there is a
smooth solution which lifts, or there is no solution, or we continue
to the next line; the probability of continuing to the next line is
always~$1/p$.

\begin{enumerate}
\item In line 1, the equation reduces to
  $z^2+nz-e\equiv0\pmod{p}$. With probability $\frac12(1-1/p)$ this
  has no roots over~$\Fp$ and the equation is insoluble; with the
  same probability it has simple $\Fp$-roots which lift, so the
  equation is soluble; and with probability $1/p$ there is a double
  root.  In the latter case we shift the root to $z\equiv0$, leading
  to line~2.
\item With probability $1-1/p$ we have $v(d)=1$; then $\frac{1}{p}f$
  is linear modulo~$p$, so has a simple root which lifts, and we
  obtain a solution (with $z=0$). Otherwise, $v(d)\ge2$, leading to
  line~3.
\item With probability $1-1/p$ we have $v(e)=1$; then there are no
  solutions.  Otherwise rescale, replacing $z$ by~$pz$ and dividing
  through by~$p^2$, leading to line 4.
\item The reduced equation now has the form $z^2 + (mx+n)z \equiv
  cx^2+dx+e$, a possibly singular conic.  If $z^2+mz-c$ is
  irreducible (which has probability~$\frac{1}{2}(1-1/p)$), then by 
  Lemma~\ref{lem:two-lemmas-1} we have solubility with
  probability~$\oldbeta=p/(p+1)$.  If $z^2+mz-c$ splits over~$\Fp$
  (again with probability $\frac{1}{2}(1-1/p)$), then the conic has
  two distinct $\Fp$-points at infinity, so (whether or not it is
  singular) certainly has at least one more smooth $\Fp$-point.
  Lastly, with probability~$1/p$ the quadratic has a double root
  modulo~$p$; we shift it to $z\equiv0$, leading to line~5.
\item With probability $1-1/p$ we have $v(d)=0$; then the reduced
  equation is linear in~$x$ and we have solubility.  Otherwise
  $v(d)\ge1$, leading to line~6.
\item The reduced equation is just as in line~1, and with probability~$1/p$ 
  we reach line 7.
\item In line 7, the probability $\nu_7$ is the probability of
  solubility of any generalized binary quartic that reduces to $z^2$
  modulo~$p$, given also that $v(a)=1$; this, by definition, is~$\sigma_4'$.
\end{enumerate}

\end{proof}

\subsubsection{Conclusion of the proof of Theorem~\ref{thm:genbinquartic}}
\label{subsubsec:conclusion}
We have established two linear equations (\ref{eqn:rho-from-rho4}) and
(\ref{eqn:rho4-from-rho}) relating $\rho$ and $\sigma_4$.  Solving for
these, we obtain
\[   \sigma_4 = \frac{5 p^{10} + 8 p^9 + p^8 - p^7 + 2 p^6 - 3 p^5 + 4 p^3
  - 10 p - 6}{ 8 (p+1) (p^9-1)}, \]
and finally
\begin{align*}
\rho &= \frac{8 p^{10} + 8 p^9 - 4 p^8 + 2 p^6 + p^5 - 2 p^4 + p^3 -
  p^2 - 8 p - 5}{ 8 (p+1) (p^9-1)}\\
     &= 1 - \frac{4p^7 + 4p^6 + 2p^5 + p^4 + 3p^3 + 2p^2 + 3p + 3}
     {8  (p + 1)  (p^2 + p + 1)  (p^6 + p^3 + 1)}
\end{align*}
as stated in Theorem~\ref{thm:genbinquartic}.

\subsection{The density of soluble binary quartics over $\Zp$}
\label{subsec:p=2}

To complete the proof of Theorem~\ref{maingenusonelocal}, we need to
know the density~$\rho(p)$ of soluble binary quartics \eqref{hypereq}
over~$\Zp$, as opposed to the density~$\rho'(p)$ of generalized binary
quartics (\ref{eq:gen_bq}) over~$\Zp$.  For odd primes, it is clear
that we may complete the square without affecting the density, and
obtain the same density as given in Theorem~\ref{thm:genbinquartic},
so $\rho(p)=\rho'(p)=\rho$ (in the notation used above).

Now let $p=2$, and consider the binary quartic
\begin{equation*}
   z^2 = ax^4+bx^3y+cx^2y^2+dxy^3+ey^4.
\end{equation*}
If $b$ or $d$ is odd, then there are smooth points on the reduction
modulo~$2$. If instead $b$ and $d$ are both even, then replacing $z$
by~$z + a x^2 + c xy + e y^2$ gives a generalized binary quartic with
all coefficients even.  The probability of solubility in this case
is~$\sigma_4=4691/6132$, as computed in
\S\ref{subsubsec:conclusion}, giving a final answer of
$\rho(2)=(3/4) + (1/4)\sigma_4 = 23087/24528$.  This completes the
proof of Theorem~\ref{maingenusonelocal}.

\section{The density of soluble binary quartics over $\R$}
\label{sec:R_density}
In this section we use rigorous numerical computational methods to
establish bounds for~$\rho(\infty)$, the probability that
a random binary quartic form~$f$ with real coefficients independently
and uniformly distributed in $[-1,1]$ is not negative definite.
Clearly, $1-\rho(\infty)$ is the probability that $f$ is negative
definite, and the probability that $f$ is positive definite is the
same, so $2-2\rho(\infty)$ is the probability that $f$ has no real
roots.

It suffices to consider inhomogeneous polynomials $f(x)\in\R[x]$.
Writing $f(x) = ax^4+bx^3+cx^2+dx+e$, let $\Delta=\Delta(a,b,c,d,e)$
be the discriminant of~$f$, which is a polynomial in $a,b,c,d,e$ of
degree~$6$, with 16 terms.  We also define the quantities
\begin{align*}
H &= 8ac-3b^2; \\
Q &= 3b^4-16ab^2c+16a^2c^2+16a^2bd-64a^3e.
\end{align*}
Then the condition that $f$ has no real roots is
\begin{equation}\label{posdef-criterion}
\Delta>0, \qquad\text{and}\qquad H>0\quad\text{or}\quad Q<0
\end{equation}
(see \cite[Prop.~7]{JC34}).  Hence $\rho(\infty) = 1-\vol({\mathcal
  R})/64$, where $\vol({\mathcal R})$ is the volume of the region
\[ \mathcal R =
   \{(a,b,c,d,e)\in[-1,1]^5 \mid
   (\Delta>0)\ \text{and}\ ((H>0)\ \text{or}\ (Q<0)) \}.
\]
We have been unable to compute this value exactly by
analytic means.  Instead, we have computed rigorous lower and upper
bounds for~$\vol({\mathcal R})$, and hence for $\rho(\infty)$,
numerically.

\begin{proposition}[$=$ Proposition~\ref{prop:R-density-bounds}]
  \label{prop:R-bounds}
  The probability $\rho(\infty)$ that a random real quartic with
  coefficients independently and uniformly distributed in $[-1,1]$ is
  not negative definite satisfies
\[  0.873954 \le \rho(\infty) \le 0.874124. \]
\end{proposition}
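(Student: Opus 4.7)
The plan is to bound $\vol(\mathcal{R})$ rigorously by adaptive interval subdivision of the cube $[-1,1]^5$, using the sign conditions on the three polynomials $\Delta$, $H$, $Q$ to classify sub-boxes as lying entirely inside $\mathcal{R}$, entirely outside $\mathcal{R}$, or as boundary boxes that require further subdivision. Summing the volumes of the ``inside'' boxes yields a certified lower bound for $\vol(\mathcal{R})$, while summing the inside boxes together with all undecided boxes yields a certified upper bound. Translating via $\rho(\infty) = 1 - \vol(\mathcal{R})/64$ then gives the stated numerical bounds.

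More concretely, I would first encode $\Delta$, $H$, $Q$ as explicit polynomials in $a,b,c,d,e$ and implement their evaluation on axis-aligned boxes $B \subset [-1,1]^5$ using interval arithmetic (ideally with outward directed rounding so the output is provably an enclosure of the true range). For such a box, write $[\Delta(B)]$, $[H(B)]$, $[Q(B)]$ for the resulting interval enclosures. Apply the following decision procedure based on the criterion \eqref{posdef-criterion}: if $[\Delta(B)] \subset (-\infty, 0)$, the whole box lies in $\mathcal{R}$ (negative discriminant gives real roots automatically, so no negative-definiteness); if $[\Delta(B)] \subset (0,\infty)$ and additionally either $[H(B)] \subset (0,\infty)$ or $[Q(B)] \subset (-\infty, 0)$, then $B$ lies entirely outside $\mathcal{R}$; if instead $[\Delta(B)] \subset (0,\infty)$ and $[H(B)] \subset (-\infty,0]$ and $[Q(B)] \subset [0,\infty)$, then $B$ lies outside $\mathcal{R}$ as well. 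All other boxes are marked undecided. (A symmetry $f \mapsto -f$, i.e. negating all coefficients, lets us consider only the negative-definite side and double the result, saving a factor of $2$ in the computation.)

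The algorithm then proceeds by maintaining a queue of undecided boxes, starting from a moderately fine uniform grid of $[-1,1]^5$, and repeatedly bisecting each undecided box along its longest coordinate until either it is classified or its volume falls below a prescribed threshold $\varepsilon$. Upon termination, writing $V_{\mathrm{in}}$ for the total volume of boxes certified to lie in $\mathcal{R}$ and $V_{\partial}$ for the total volume of undecided boxes, one has
\[
V_{\mathrm{in}} \le \vol(\mathcal{R}) \le V_{\mathrm{in}} + V_{\partial},
\]
and hence
\[
1 - \frac{V_{\mathrm{in}} + V_{\partial}}{64} \le \rho(\infty) \le 1 - \frac{V_{\mathrm{in}}}{64}.
\]
Running the subdivision to a sufficiently small $\varepsilon$, the computer output is reported as the interval $[0.873954,\ 0.874124]$ claimed in the proposition.

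The main obstacle is that naive interval evaluation of $\Delta$ (a degree-$6$ polynomial in five variables with $16$ terms involving cancellation) dramatically overestimates the range on boxes straddling the discriminant locus $\{\Delta = 0\}$, forcing extremely deep subdivision near this four-dimensional hypersurface. To control this, I would (i) rewrite $\Delta$, $H$, $Q$ in a Horner-like form that reduces dependency blowup, (ii) use the mean-value or centered form $p(B) \subset p(c) + \nabla p(B) \cdot (B - c)$ for tighter enclosures near a centerpoint $c$, and (iii) refine adaptively only in the coordinates that contribute the most to the width of $[\Delta(B)]$. Even with these optimizations, the cost scales roughly like the $4$-dimensional measure of the region $\{\Delta \approx 0\} \cap [-1,1]^5$ times $\varepsilon^{-1}$, so producing a gap of order $2 \times 10^{-4}$ in $\rho(\infty)$ requires careful engineering; however, no analytical difficulty remains beyond verifying the interval arithmetic and the sign-based classification, both of which are routine and certifiably correct.
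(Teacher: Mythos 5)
Your strategy---rigorous adaptive subdivision of the coefficient cube into boxes certified as inside, outside, or undecided---is the same overall framework as the paper's, but your certification step is genuinely different. You decide each box by interval evaluation of the three polynomials $\Delta$, $H$, $Q$ from the criterion \eqref{posdef-criterion}, whereas the paper exploits the fact that for fixed $x\ge0$ (resp.\ $x\le0$) the evaluation $f\mapsto f(x)$ is monotone in each coefficient, so the exact envelope of $\{f(x):f\in B(l,u)\}$ is attained at two explicit corners of the box; testing whether four corner quartics are negative on a half-line (by exact root counting via Descartes' rule of signs) then decides the box with \emph{no} overestimation, and all arithmetic is exact rational with power-of-two denominators, so no directed rounding is needed. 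The paper further reduces from five to four dimensions using homogeneity (splitting by which coefficient is largest in absolute value and integrating $\int_0^1 x^4\,dx = 1/5$), and even so needed recursion depth $46$ and roughly $116$ days of CPU time to certify the stated interval of width about $1.7\times10^{-4}$. Two points in your write-up need repair. First, your labels are internally inconsistent: you declare a box with $\Delta<0$ to lie \emph{in} $\mathcal R$ and a box with $\Delta>0$ and $H>0$ to lie \emph{outside} $\mathcal R$, yet you invoke the formula $\rho(\infty)=1-\vol(\mathcal R)/64$, which presupposes that $\mathcal R$ is the locus where $f$ has \emph{no} real roots; either swap the in/out labels or replace the formula by $\rho(\infty)=\vol(\mathcal R')/32$ with $\mathcal R'$ the non-negative-definite locus. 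Second, interval evaluation of the degree-$6$, $16$-term discriminant suffers from severe dependency blow-up near $\{\Delta=0\}$, as you acknowledge; since the paper's tighter, overestimation-free corner test already required extreme depth to pin down the fourth decimal place, it is not clear your scheme reaches the claimed precision in feasible time, and of course the specific endpoints $0.873954$ and $0.874124$ can only be justified by actually running the computation.
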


The simplest way to estimate $\rho(\infty)$ non-rigorously is by
Monte Carlo sampling.  Taking $10^7$ sampling points in $[-1,1]^5$ and
using \eqref{posdef-criterion} to test for being positive or negative
definite gives the estimate $\rho(\infty)\approx0.8741239$; using
$10^8$ sampling points gives $\rho(\infty)\approx0.874112095$.  This
suggests that $\rho(\infty)\approx 0.87411$, and one expects this to be
close to the actual value, but we cannot make any rigorous statement
without additional work.

To obtain rigorous bounds as in Proposition
\ref{prop:R-bounds} we have tried several methods, each
implemented as a {\tt C} program for efficiency, using only exact
arithmetic to avoid any rounding errors.  Here we only describe a
basic recursive strategy and sketch one improvement, which takes
advantage of homogeneity to reduce from a
5-dimensional problem to a 4-dimensional one.

The basic recursive method proceeds as follows.  Identify points
$(a,b,c,d,e)\in\R^5$ with quartics~$f=f_{(a,b,c,d,e)}\in\R[x]$.  Given
two vectors $l=(l_0,l_1,l_2,l_3,l_4)$ and $u=(u_0,u_1,u_2,u_3,u_4)$
in~$\R^5$ satisfying $l\le u$ (meaning $l_i\le u_i$ for all $i$), we
consider the 5-dimensional box
\[
B(l,u) = \{f=(a,b,c,d,e)\in\R^5\mid l_0\le a\le u_0, \dots, l_4\le e \le
u_4\} = \{f\in\R^5\mid l\le f\le u\}.
\]
Let $s=(l_0,u_1,l_2,u_3,l_4)$ and $t=(u_0,l_1,u_2,l_3,u_4)$: these are
both corners of the box.  Then for $x\ge0$ we have
\[
f_l(x) \le f(x) \le f_u(x) \qquad\text{for all $f\in B(l,u)$},
\]
while for $x<0$ we have
\[
f_s(x) \le f(x) \le f_t(x) \qquad\text{for all $f\in B(l,u)$}.
\]
It follows that
\begin{itemize}
  \item all $f\in B(l,u)$ are negative definite if and only if both
    $f_u$ and $f_t$ are negative definite;
  \item no $f\in B(l,u)$ are negative definite if either $f_l(x)\ge0$
    for some $x\ge0$, or $f_s(x)\ge0$ for some $x\le0$.
\end{itemize}
Note that the second condition is only sufficient, not necessary.  To
test it, we need to be able to test whether a quartic $f$ takes only
negative values on the positive or negative real half-lines.  In our
code we do this by using a function in the PARI/GP library \cite{PARI2}
based on Descartes' ``rule of signs'', which gives the exact number of
real roots in any interval, using only exact arithmetic for
polynomials with rational coefficients.

Hence, by testing just four quartics, defined by four of the 32
corners of the box $B(l,u)$, we are able to determine whether one
of three cases occurs: (i) all $f\in B(l,u)$ are negative definite;
(ii) no $f\in B(l,u)$ are negative definite, or (iii) neither
(undecided).  In case (iii), we may then divide the box into two
sub-boxes of half the volume by bisecting the longest edge (halving
the maximum value of $u_i-l_i$) and recurse.  We start with the box $B
= [-1,1]^5$ defined by $l=(-1,-1,-1,-1,-1)$ and $u=(1,1,1,1,1)$, and
we also initialise to zero two variables~$v_1$, $v_2$, which will hold
lower bounds for the total volume of sub-boxes containing all,
respectively no, negative definite quartics.  On testing each box, we
either add its volume to one of these variables if case (i) or (ii)
holds, or recurse.  The stopping condition for the recursion is that
we do not recurse when a sub-box is undecided and below a given volume
threshold; equivalently, we bound the depth of recursion.

At the end of this process, we may conclude that the volume we require
is at least~$v_1$ and at most~$32-v_2$, and hence
\[
\frac{1}{32}v_2\le\rho(\infty)\le 1-\frac{1}{32}v_1.
\]
The length of this interval is $1/32$ times the total volume of the
sub-boxes left undecided, which is $32-v_1-v_2$.

Note that all the boxes considered during this process have all their
vertices (and hence their volume) rational, with a denominator which
is a power of~$2$.  Also, all the quartics we test for being positive
or negative have integer coefficients scaled by a power of~$2$, so
this test (using \eqref{posdef-criterion}) is also exact.  Hence we
may use exact arithmetic throughout, so that there are no rounding
errors involved, and obtain exact rational values (with denominator a
power of~$2$) for $v_1$ and~$v_2$, and hence for the bounds
on~$\rho(\infty)$.  Here we express them as decimals to 6 decimal
places for simplicity, rounding down the lower bound and rounding up
the upper bound.

In our implementation, we use various obvious symmetries, such as
reversing the coefficient sequence or changing $x$ to $-x$, to reduce
the running time.  By increasing the depth of recursion, we may reduce
the undecided volume and hence the length of the interval in which
$\rho(\infty)$ certainly lies.  In practice, however, we have found
that the convergence of this process is extremely slow.
In order to speed up the computation and hence obtain tighter bounds,
we implemented the following improvement.  The condition that the
quartic with coefficients $(a,b,c,d,e)$ is negative definite is
obviously homogeneous with respect to scaling by positive constants.
We subdivide the set of quartics according to which coefficient is
greatest in absolute value, and whether it is positive or negative,
giving ten subsets.  (We may ignore the boundary regions where the
maximum is attained at more than one coefficient, since these have
measure zero.)  Some of these subsets are trivial to deal with: for
example, if either the leading coefficient or the constant coefficient
are positive, then the quartic is certainly not negative
definite. Each subset may be scaled so that the maximum coefficient is
$\pm1$, and then a $4$-dimensional recursion similar to the
$5$-dimensional one described above can be used to give lower and
upper bounds on the volume of the negative definite forms in each
subset.  The final step is to add these and scale appropriately
(effectively integrating with respect to the actual maximum
coefficient, from $0$ to $1$) to obtain the lower and upper bounds.

The last scaling step introduces a factor of~$5$ in the denominator,
since $\int_0^1x^4dx=1/5$.  In the table below we round the exact
bounds computed to $6$ decimal places.

Using this scaling method, we were able to increase the depth of
recursion to~$46$.  The following table shows the bounds obtained, and
the computation time (on a single processor),
for recursion depths up to~$46$ (the latter taking nearly 116 days):
\[
\begin{array}{|r|r|r|r|}
\hline
\text{Depth} & \text{Time} & \text{Lower bound} & \text{Upper bound}\\
\hline\hline
20  &       10\text{s}  &      0.863648  &   0.885568 \\
25  &    2\text{m}~24\text{s}  &      0.869623  &   0.878944 \\
30  &   39\text{m}~28\text{s}  &      0.872427  &   0.875876 \\
35  &  516\text{m}~52\text{s}  &      0.873360  &   0.874896 \\
40  & 6620\text{m}~35\text{s}  &      0.873767  &   0.874447 \\
45  & 100988\text{m}~50\text{s} &     0.873930  &   0.874157 \\
46  & 167990\text{m}~53\text{s} &     0.873954  &   0.874124 \\ \hline
\end{array}
\]
This justifies the bounds given in
Proposition~\ref{prop:R-bounds} for $\rho(\infty)\approx
0.874$ (to 3 significant figures).

We also used a Monte Carlo simulation to estimate $\rho'(\infty)$,
the density of generalized binary quartics which are soluble over the
reals.  Taking $10^8$ samples from $[-1,1]^8$ we obtained the value
of~$0.873742745$.  We have not determined rigorous bounds for the
actual value of~$\rho'(\infty)$, but expect it to be a little
smaller than $\rho(\infty)$.

\subsection*{Acknowledgments}
We thank Benedict Gross, Marc Masdeu, Michael Stoll, Terence Tao, and
Xiaoheng Wang for helpful conversations. The first author was
supported by a Simons Investigator Grant and NSF grant~DMS-1001828,
and thanks the Flatiron Institute for its kind hospitality during the
academic year 2019--2020.  The second author was supported by EPSRC
Programme Grant EP/K034383/1 \textit{LMF: L-Functions and Modular
  Forms}, the Horizon 2020 European Research Infrastructures project
\textit{OpenDreamKit} (\#676541), and the Heilbronn Institute for
Mathematical Research.

\end{document}